\tikzset{>=latex}
\newcommand{\showcomments}{yes}
\renewcommand{\showcomments}{no}
\newsavebox{\commentbox}
\newenvironment{com}%
{\ifthenelse{\equal{\showcomments}{yes}}%
{\footnotemark
        \begin{lrbox}{\commentbox}
        \begin{minipage}[t]{1.25in}\raggedright\sffamily\tiny
        \footnotemark[\arabic{footnote}]}
{\begin{lrbox}{\commentbox}}}%
{\ifthenelse{\equal{\showcomments}{yes}}%
{\end{minipage}\end{lrbox}\marginpar{\usebox{\commentbox}}}
{\end{lrbox}}}
\DeclareMathOperator{\rank}{rk}
\theoremstyle{definition}
\newtheorem{thm}{Theorem}[section]
\newtheorem{lem}[thm]{Lemma}
\newtheorem{prop}[thm]{Proposition}
\newtheorem{exa}[thm]{Example}
\newtheorem{question}[thm]{Question}
\newtheorem{remark}[thm]{Remark}
\newtheorem{cor}[thm]{Corollary}
\newtheorem{thmx}{Theorem}
\newtheorem{corx}[thmx]{Corollary}
\author{Kasia Jankiewicz}
\address{Department of Mathematics, University of Chicago, Chicago, Illinois, 60637}
\email{kasia@math.uchicago.edu}
\title{Splittings of triangle Artin groups}
\begin{document}
\begin{com}
{\bf \normalsize COMMENTS\\}
ARE\\
SHOWING!\\
\end{com}

\begin{abstract} We show that a triangle Artin group $\text{Art}_{MNP}$ where $M\leq N\leq P$ splits as an amalgamated product or an HNN extension of finite rank free groups, provided that either $M>2$, or $N>3$. We also prove that all even three generator Artin groups are residually finite.
\end{abstract}
\maketitle


A triangle Artin group is given by the presentation
$$\text{Art}_{MNP} = \langle a, b, c\mid (a, b)_{M} = (b, a)_{M}, (b,c)_{N} = (c,b)_{N}, (c,a)_{P} = (a,c)_{P}\rangle,$$
where $(a,b)_M$ denote the alternating word $aba\dots$ of length $M$. 
Squier showed that the Euclidean triangle Artin group, i.e.\ $\text{Art}_{236}, \text{Art}_{244}$ and $\text{Art}_{333}$, split as amalgamated products or an HNN extension of finite rank free groups along finite index subgroups \cite{Squier87}. We generalize that result to other triangle Artin groups.
\begin{restatable}{thmx}{splitting}
\label{thm:splitting} 
Suppose that $M\leq N\leq P$ where either $M>2$, or $N>3$. Then the Artin group $\text{Art}_{MNP}$ splits as an amalgamated product or an HNN extension of finite rank free groups.
\end{restatable}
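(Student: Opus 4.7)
The plan is to construct, for each allowable triple $(M,N,P)$, an explicit graph-of-groups decomposition of $\text{Art}_{MNP}$ with finite-rank free vertex and edge groups, in the spirit of Squier's Euclidean constructions. The hypothesis $M \geq 3$ versus $M = 2$, $N \geq 4$ should be handled as two separate regimes, since the two-generator parabolic structure differs fundamentally between them: in the first all three dihedral subgroups are non-abelian, while in the second one of them is $\Z^2$.

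\textbf{Step 1 (tree action).} In both regimes I would look for a suitable action of $\text{Art}_{MNP}$ on a Bass-Serre tree with finitely generated free stabilizers. Natural candidates include pulling back the splitting of an intermediate virtually free quotient (for example, of the associated Coxeter group $W_{MNP}$, or of a central quotient of a dihedral parabolic), or writing down a direct HNN/amalgam presentation and then checking Britton's lemma. The vertex groups should be built from the three dihedral Artin subgroups $\langle a,b\rangle = \text{Art}_M$, $\langle b,c\rangle = \text{Art}_N$, $\langle a,c\rangle = \text{Art}_P$; each of these, when non-abelian, has an infinite cyclic center with virtually free quotient, so one can pull back explicit finite-rank free subgroups of appropriate index.

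\textbf{Step 2 (two regimes).} For $M \geq 3$ I expect an amalgamated product $A *_C B$ where $A, B$ are finite-rank free subgroups drawn from two of the three dihedral Artin subgroups and $C$ is a common subgroup, typically cyclic generated by a power of one Artin generator. For $M=2$, $N\geq 4$, the copy of $\Z^2 = \langle a, b\rangle$ blocks this approach along the $\{a,b\}$ pair, so I would instead build an HNN extension with stable letter $a$ (or $b$) acting by conjugation on a finite-rank free base assembled from $\langle b, c\rangle$ together with one or more of its conjugates; the hypothesis $N \geq 4$ is what gives enough ``room'' in $\text{Art}_N$ for these conjugates to embed into a single free group rather than generating something larger.

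\textbf{Main obstacle.} The core difficulty is ensuring simultaneously that the vertex groups are \emph{finite-rank} free (not merely virtually free, nor free of infinite rank) and that the proposed graph of groups actually presents $\text{Art}_{MNP}$. The first requires an explicit choice of free subgroups inside the dihedral Artin groups that splices coherently around all three edges of the triangle of relations, which is where the interaction between $M$, $N$, and $P$ becomes delicate. The second reduces to a Tietze transformation argument, or equivalently to verifying Britton's lemma in the HNN case and the amalgam normal form in the other case; this is where the hypothesis ``$M>2$ or $N>3$'' should enter, since the excluded triples $(2,2,P)$ and $(2,3,P)$ presumably force extra relations that prevent a free-by-free splitting.
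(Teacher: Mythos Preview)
Your outline has the right large-scale dichotomy but is missing the concrete idea that actually produces the splitting, and several of your specific guesses point away from it. The case $M\geq 3$ is handled in the paper by citation to prior work, so the new content is entirely $\text{Art}_{2MN}$ with $M,N\geq 4$ (in the paper's convention the label $2$ sits between $a$ and $c$, so it is $\langle a,c\rangle\simeq\Z^2$, not $\langle a,b\rangle$). The paper does \emph{not} assemble vertex groups from the dihedral parabolics $\text{Art}_M,\text{Art}_N,\text{Art}_P$, does not pull back a splitting from the Coxeter group or any virtually free quotient, and the edge group is never cyclic. The decisive move is a change of generators: set $x=ab$, $y=cb$, keep $b$. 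The three relations become
\[
r_M(b,x),\qquad r_N(b,y),\qquad bx^{-1}yb^{-1}=yx^{-1},
\]
where $r_M(b,x)$ reads $bx^{m}b^{-1}=x^{m}$ for $M=2m$ and $bx^{m}b=x^{m+1}$ for $M=2m+1$. This presentation is already in Bass--Serre form relative to $b$: when $M,N$ are both even it is visibly the HNN extension $F_2*_{F_3,\beta}$ with base $\langle x,y\rangle$, edge group $\langle x^{m},y^{n},x^{-1}y\rangle$, and stable letter $b$; when at least one is odd, a height function on the presentation complex yields an amalgam $F_2*_{F_5}F_3$.

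The obstacle you name --- finding explicit finite-rank free vertex groups and verifying the presentation --- is exactly what the substitution dissolves, and I do not see how your proposed route through the dihedral parabolics would uncover that the correct base is $\langle ab,cb\rangle$, which is none of the three standard parabolics. The hypothesis $M,N\geq 4$ enters not via Britton's lemma or a normal-form check, but simply to guarantee that the folded edge graph immerses in the base (one needs $m-1,n-1>0$ so that $\phi_{\pm}$ are injective). Your sketch, as it stands, would require substantial new ideas to become a proof.
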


The assumptions of the above theorem are satisfied for all triples of numbers except for $(2,2,P)$ and $(2,3,P)$. 
An Artin group is \emph{spherical}, if the associated Coxeter group is finite. A three generator Artin group $\text{Art}_{MNP}$ is spherical exactly when $\frac{1}{M}+\frac{1}{N}+\frac{1}{P}>1$, i.e.\ $(M,N,P) = (2,2,P)$ or $(2,3,3), (2,3,4), (2,3,5)$. 
All spherical Artin groups have infinite center, and none of them splits as a graph of finite rank free groups (see \Cref{prop:cyclic center}). The remaining cases are $(2,3,P)$ where $P\geq 6$. The above theorem holds for triple $(2,3,6)$ by \cite{Squier87}.  It remains unknown for $(2,3,P)$ with $P\geq 7$.
The cases where $M>2$ were considered in \cite[Thm B]{JankiewiczArtinRf} and it was proven that they all split as amalgamated products of finite rank free groups. 

Graphs of free groups form an important family of examples in geometric group theory. Graph of free groups with cyclic edge groups that contain no Baumslag-Solitar subgroups are virtually special \cite{HsuWiseCubulating}, and contain quasiconvex surface subgroups \cite{Wilton18}. Graphs of free groups with arbitrary edge groups can exhibit various behaviors.
For example, an amalgamated product $A*_CB $ of finite rank free groups  where $C$ is malnormal in $A,B$
is hyperbolic \cite{BestvinaFeighn92}, and virtually special \cite{HsuWiseCubulatingMalnormal}. 
On the other hand there are examples of amalgamated products of finite rank free groups that are not residually finite \cite{Bhattacharjee94},  \cite{Wise96Thesis}, and even simple \cite{BurgerMozes97}. The last two arise as lattices in the automorphism group of a product of two trees.

By further analysis of the splitting, we are also able to show that some of the considered Artin groups are residually finite.
\begin{thmx}\label{thm:rf}
The Artin group $\text{Art}_{2MN}$ where $M,N\geq 4$ and at least one of $M,N$ is even, is residually finite.
\end{thmx}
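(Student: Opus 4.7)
By \Cref{thm:splitting}, the hypothesis $M, N \geq 4$ ensures that $\text{Art}_{2MN}$ admits a splitting as an amalgamated product $A *_C B$ or an HNN extension $A *_C$ of finite rank free groups. My plan is to leverage this splitting together with the evenness of one of $M, N$ to construct a separating family of finite quotients.

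The first step is to analyze the splitting explicitly, identifying how the three two-generator subgroups $\langle b, c\rangle = \text{Art}_M$, $\langle c, a\rangle = \text{Art}_N$, and $\langle a, b\rangle \cong \mathbb{Z}^2$ distribute across the vertex free groups $A, B$ and the edge free group $C$. Since no free group contains $\mathbb{Z}^2$, the commuting pair $\{a, b\}$ must straddle an edge of the splitting, with at least one of $a, b$ playing the role of a stable letter (in the HNN case) or crossing the amalgamation (in the amalgamated product case).

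Assume without loss of generality that $M$ is even. Then $\text{Art}_M$ has infinite cyclic center generated by $z := (bc)^{M/2}$, and the central quotient $\text{Art}_M / \langle z\rangle \cong \mathbb{Z} * \mathbb{Z}/(M/2)$ is virtually free, hence residually finite. The plan is to use $z$ and its powers to produce common finite quotients of the vertex free groups that agree on the edge group. Since finitely generated subgroups of free groups are separable (Marshall Hall's theorem), every finite-index subgroup of $C$ extends to a finite-index subgroup of $A$ (and of $B$), so the essential task is to make these extensions compatible: for every nontrivial $g \in \text{Art}_{2MN}$, one needs finite quotients $\pi_A \colon A \to Q_A$ and $\pi_B \colon B \to Q_B$ with $\pi_A|_C = \pi_B|_C$ and with nontrivial image of $g$ in the induced quotient of the amalgam. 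I would carry this out by induction on the Bass-Serre length of $g$ relative to the splitting.

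The main obstacle is this compatibility requirement: simultaneous finite quotients of $A$ and $B$ that agree on $C$ are not furnished automatically by Marshall Hall's theorem, and indeed amalgamated products of free groups can fail to be residually finite in the absence of compatible quotients (Bhattacharjee's examples). The central element $z$ is the tool for producing the ``new'' compatible quotients that are needed, so I expect the residual finiteness proof to hinge on carefully choosing powers $z^k$ so that the induced quotient of $C$ extends compatibly to both $A$ and $B$ and still detects $g$. A further subtlety is that the hypothesis requires only one of $M, N$ to be even, so the construction is inherently asymmetric: the centrality in the even dihedral subgroup $\text{Art}_M$ must be made to separate all elements, including those whose essential behavior lies in the (possibly odd-labeled) dihedral subgroup $\text{Art}_N$.
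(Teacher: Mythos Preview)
Your proposal is a plan rather than a proof, and the gap is exactly the one you name yourself: you have no concrete mechanism for producing compatible finite quotients of the vertex groups that agree on the edge group. Invoking the central element $z=(bc)^{M/2}$ of $\text{Art}_M$ and Marshall Hall's theorem does not by itself yield compatibility; you would need, for each nontrivial $g$, finite-index normal subgroups $N_A\lhd A$ and $N_B\lhd B$ with $N_A\cap C=N_B\cap C$ and $g$ surviving in the resulting quotient of the amalgam. You give no construction of such $N_A,N_B$, and your framing in terms of the standard generators $a,b,c$ and the dihedral subgroups $\text{Art}_M,\text{Art}_N$ is disconnected from the actual splitting, which is written in the generators $x=ab$, $y=cb$, $b$ and has vertex group $A=\langle x,y\rangle\simeq F_2$ with edge group $B=\langle x^m,y^n,x^{-1}y\rangle$ (in the HNN case) or $C\simeq F_5$ (in the amalgamated product case). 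The element $z$ does not sit in any evident way inside this picture, and the ``induction on Bass--Serre length'' you allude to is precisely the step that fails in the non--residually-finite examples you cite.

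The paper's argument is entirely different and does not attempt to build compatible quotients directly. Instead it applies a criterion (from the author's earlier paper, ultimately resting on Wise's work on algebraically clean graphs of free groups and the Hsu--Wise/Bestvina--Feighn combination theorems): one passes to a quotient $\pi:A\to\hat A$ where $\hat A$ is a hyperbolic von Dyck triangle group $\langle x,y\mid x^m,\,y^n,\,(x^{-1}y)^p\rangle$ for suitable $p$, and then verifies that (i) $\hat A$ is locally quasiconvex, virtually special, hyperbolic; (ii) the image of the edge group under $\pi$ is almost malnormal in $\hat A$; and (iii) the edge-group automorphism descends. Almost malnormality is checked by an explicit Stallings fiber-product computation of the intersections $C\cap g^{-1}Cg$, and this computation is precisely where the parity hypothesis enters: when both $M,N$ are odd the fiber product is too large and the image of $C$ fails to be almost malnormal in any such $\hat A$. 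None of this structure appears in your outline.
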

An Artin group $\text{Art}_{MNP}$ is \emph{even} if all $M,N,P$ are even.
The above theorem combined with our result in \cite{JankiewiczArtinRf} (and the fact that $\text{Art}_{22P} = \mathbb Z\times \text{Art}_P$ is linear) gives us the following. 
\begin{corx}
All even Artin groups on three generators are residually finite. 
\end{corx}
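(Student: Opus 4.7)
The plan is a short case analysis on the triple $(M,N,P)$ with $M\leq N\leq P$ all even, using the three ingredients indicated in the passage: linearity of $\text{Art}_{22P}$, \Cref{thm:rf}, and the residual finiteness results from \cite{JankiewiczArtinRf}. Since each of $M,N,P$ is even, the smallest entry $M$ is either $2$ or at least $4$, and similarly for $N$. This gives three mutually exclusive cases to dispatch.

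First I would handle the degenerate case $M=N=2$. Here one has the well-known isomorphism $\text{Art}_{22P}\cong \mathbb{Z}\times \text{Art}_P$, where $\text{Art}_P$ is a two-generator Artin (dihedral Artin) group. These dihedral Artin groups are linear (they are central extensions of triangle groups, or equivalently torus knot groups), so the direct product with $\mathbb{Z}$ is also linear and therefore residually finite. This case is explicitly mentioned in the paper as being handled by linearity.

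Next I would handle the case $M=2$, $N\geq 4$. Because all three exponents are even, we automatically have $N$ and $P$ even and both at least $4$. The hypothesis of \Cref{thm:rf} is satisfied (in fact both $M,N$ in its statement are even here), so \Cref{thm:rf} immediately gives residual finiteness. This is the case where the new contribution of the paper is being used.

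Finally, for $M\geq 4$ all three exponents are even and at least $4$; this is the regime $M>2$, which is covered by the author's previous work \cite{JankiewiczArtinRf}, where the even triangle Artin groups in this range are shown to be residually finite. Combining the three cases exhausts all even three-generator triples, proving the corollary. I do not expect any real obstacle: the content of the corollary is the bookkeeping of the three regimes, with the genuinely new input, the case $M=2$ with $N,P\geq 4$ even, being precisely the statement of \Cref{thm:rf} just established.
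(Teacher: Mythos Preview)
Your proposal is correct and matches the paper's own argument: the corollary is stated as an immediate consequence of \Cref{thm:rf}, the residual finiteness results of \cite{JankiewiczArtinRf} for $M,N,P\geq 3$, and the linearity of $\text{Art}_{22P}=\mathbb Z\times\text{Art}_P$, which is exactly the three-case split you carry out.
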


All linear groups are residually finite \cite{Malcev40}, so residual finiteness can be viewed as testing for linearity.
Spherical Artin groups are known to be linear (\cite{Krammer2002}, \cite{Bigelow2001} for braid groups, and \cite{CohenWales2002}, \cite{Digne2003} for other spherical Artin groups).
The right-angled Artin groups are also well known to be linear, but not much more is known about linearity of Artin groups. 
In last years, a successful approach in proving that groups are linear is by showing that they are virtually special. Artin groups whose defining graphs are forests are the fundamental groups of graph manifolds with boundary \cite{Brunner92}, \cite{HermillerMeier99}, 
and so they are virtually special \cite{LiuGraphManifolds}, \cite{PrzytyckiWiseGraphManifolds}. 
Many Artin groups in certain classes (including $2$-dimensional, or three generator) 
are not cocompactly cubulated even virtually, unless they are sufficiently similar to RAAGs 
\cite{HuangJankiewiczPrzytycki16}, \cite{HaettelArtin}. In particular, the only (virtually) cocompactly cubulated three generator Artin group are $\text{Art}_{22M} = \mathbb Z \times \text{Art}_{M}$, $\text{Art}_{MN\infty}$ where $M,N$ are both even, $\text{Art}_{M\infty\infty} = \mathbb Z*\text{Art}_M$, and $\text{Art}_{\infty\infty\infty} = F_3$. 
Some triangle-free Artin groups 
act properly but not cocompactly on locally finite, finite dimensional 
CAT(0) cube complexes \cite{HaettelTriangleFreeArtin}.

In \cite{JankiewiczArtinRf} we showed that $\text{Art}_{MNP}$ are residually finite when $M,N,P\geq 3$, except for the cases where $(M,N,P) = (3,3,2p+1)$ with $p\geq 2$. Few more families of Artin groups are known to be residually finite, e.g.\ even FC type Artin groups~\cite{BlascoGarciaMartinezPerezParis19}, and certain triangle-free Artin groups \cite{BlascoGarciaJuhaszParis18}.

\subsection*{Organization}
In Section~\ref{sec:background} we provide some background. In Section~\ref{sec:splitting} we prove Theorem~\ref{thm:splitting} as Proposition~\ref{prop:all even splitting} and Corollary~\ref{cor:at least one odd splitting}. We also show that the only irreducible spherical Artin groups splitting as graph of finite rank free groups are dihedral. In Section~\ref{sec:rf} we recall a criterion for residual finiteness of amalgamated products and HNN extensions of free groups from \cite{JankiewiczArtinRf}  and prove Theorem~\ref{thm:rf}.

\subsection*{Acknowledgements}
This material is based upon work supported by the National Science Foundation
under Grant No.\ DMS-1928930 while the author participated in a program hosted
by the Mathematical Sciences Research Institute in Berkeley, California, during 
the Fall 2020 semester.
\section{Background}\label{sec:background}

\subsection{Graphs}
Let $X$ be a finite graph with directed edges. We denote the vertex set of $X$ by $V(X)$ and the edge set of $X$ by $E(X)$. The vertices incident to an edge $e$ are denoted by $e^+$ and $e^-$. 
A \emph{map of graphs} $f:X_1\to X_2$ sends vertices to vertices, and edges to concatenations of edges.
A map $f$ is a \emph{combinatorial map} if single edges are mapped to single edges. A combinatorial map $f$ is a \emph{combinatorial immersion} if given two edges $e_1, e_2$ such that $e_1^- = e_2^-$ we have $f(e_1) = f(e_2)$ (as oriented edges) if and only if $e_1=e_2$.
Consider two edges $e_1, e_2$ with $e_1^- = e_2^-$. A \emph{fold} is the natural combinatorial map $X\to \bar X$ where $V(\bar X) = V(X)/{e_1^+\sim e_2^+}$ and $E(\bar X)/e_1\sim e_2$. Stallings showed that every combinatorial map $X\to X'$ factors as $X \to \bar X \to X'$ where $X\to \bar X$ is a composition of finitely many folds, and $\bar X \to X'$ is a combinatorial immersion \cite{Stallings83}. We refer to $X\to \bar X$ as a \emph{folding map}.

\subsection{Maps between free groups}
Let $H,G$ be finite rank free groups. 
Let $Y$ be a bouquet of $n=\rank G$ circles. 
We can identify $\pi_1 Y \simeq F_n$ with $G$ by orienting and labelling edges of $Y$ with the generators of $G$. 
Every homomorphism $\phi:H\to G$ can be represented by a combinatorial immersion of graphs. 
Indeed, start with a map of graphs $X\to Y$ where $X$ is a bouquet of $m=\rank H$ circles. 
We think of each circle in $X$ as subdivided with edges oriented and labelled by the generators of $G$, 
so that each circles is labelled by a word from a generating set of $H$. 
By Stallings, the map $X\to Y$ factors as $X\to \bar X\to Y$ where $X\to \bar X$ is a folding map, 
and $\bar X \to Y$ is a combinatorial immersion. 
Indeed, $\bar X$ is obtained by identifying two edges with the same orientation and label that share an endpoint. 

Note that the rank of $\phi(H)$ is equal $\rank \pi_1\bar X = 1-\chi (\bar X)$ where $\chi$ denotes the Euler characteristic. In particular, a homomorphism $\phi$ is injective if and only if the folding map $X\to \bar X$ is a homotopy equivalence.  In that case, $\bar X$ is a precover of $Y$ which can be completed to a cover of $Y$ corresponding to the subgroup $H$ of $G$ via the Galois correspondence.
In particular, every subgroup of $G$ is uniquely represented by a combinatorial immersion $(X,x)\to (Y,y)$ where $y$ is the unique vertex of $Y$, and $X$ is a folded graph with basepoint $x$.
We refer to \cite{Stallings83} for more details.

\subsection{Intersections of subgroups of a free group}
Let $Y$ be a graph, and $\rho_i:(X_i,x_i)\to (Y,y)$ a combinatorial immersion for $i=1,2$. 
The \emph{fiber product of $X_1$ and $X_2$ over $Y$}, denoted $X_1\otimes_Y X_2$ is a graph with the vertex set 
$$V(X_1\otimes_Y X_2) = \{(v_1,v_2)\in V(X_1)\times V(X_2): \rho_1(v_1) = \rho_2(v_2)\},$$
and the edge set
$$E(X_1\otimes_Y X_2) = \{(e_1, e_2)\in E(X_1)\times E(X_2): \rho_1(e_1) = \rho_2(e_2)\}.
$$
The graph $X_1\otimes_Y X_2$ often has several connected components. 
There is a natural combinatorial immersion $X_1\otimes_Y X_2\to Y$, and it induces an embedding $\pi_1(X_1\otimes_Y X_2, (x_1, x_2))\to \pi_1(Y,y)$. We have the following.

\begin{thm}[ {\cite[Thm 5.5]{Stallings83}}]\label{thm:fiber product}
Let $H_1, H_2$ be two subgroups of $G = \pi_1Y$, and for $i=1,2$ let $(X_i, x_i)\to (Y,y)$ be a combinatorial immersion of graphs inducing the inclusion $H_i\hookrightarrow G$.
The intersection $H_1\cap H_2$ is represented by a combinatorial immersion $\left(X_1\otimes_Y X_2, (x_1, x_2)\right)\to (Y,y)$.
\end{thm}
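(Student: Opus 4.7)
The plan is to verify two things about the natural map $\rho: X_1 \otimes_Y X_2 \to Y$ sending $(v_1, v_2) \mapsto \rho_1(v_1) = \rho_2(v_2)$ and $(e_1, e_2) \mapsto \rho_1(e_1) = \rho_2(e_2)$: first, that this map is a combinatorial immersion, so that it induces an injection on fundamental groups and therefore represents \emph{some} subgroup of $G$; and second, that the subgroup it represents at the basepoint $(x_1, x_2)$ is precisely $H_1 \cap H_2$.

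For the immersion property, I would take two edges $(e_1, e_2)$ and $(e_1', e_2')$ of $X_1 \otimes_Y X_2$ with common initial vertex $(v_1, v_2)$ and equal image in $Y$. Then $e_1$ and $e_1'$ both start at $v_1$ and satisfy $\rho_1(e_1) = \rho_1(e_1')$, so the immersion property of $\rho_1$ forces $e_1 = e_1'$, and likewise $e_2 = e_2'$. This is essentially the only computation; the injectivity of $\rho_*$ on each component of $X_1 \otimes_Y X_2$ then follows from the general fact, already used in the excerpt, that a combinatorial immersion of graphs is $\pi_1$-injective.

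To pin down the image subgroup, I would show containment in both directions. Given a closed edge-path $\gamma$ in $X_1 \otimes_Y X_2$ based at $(x_1, x_2)$, the two coordinate projections $\pi_i \circ \gamma$ are closed edge-paths in $X_i$ based at $x_i$, and $\rho_1 \circ \pi_1 \circ \gamma = \rho \circ \gamma = \rho_2 \circ \pi_2 \circ \gamma$. Hence $\rho_*[\gamma]$ lies in $H_1 \cap H_2$. Conversely, given $g \in H_1 \cap H_2$, represent it by closed paths $\gamma_i$ in $X_i$ based at $x_i$ with $\rho_1 \circ \gamma_1 = \rho_2 \circ \gamma_2$ in $Y$; after possibly subdividing so the two paths have the same combinatorial length (using that $\rho_i$ are combinatorial), the pair $(\gamma_1, \gamma_2)$ lifts edge by edge to a closed path in $X_1 \otimes_Y X_2$ based at $(x_1, x_2)$ projecting to $g$.

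The only subtle point is the converse lifting: one has to be slightly careful that at every stage the intermediate vertices $(\gamma_1(t), \gamma_2(t))$ genuinely sit in $V(X_1 \otimes_Y X_2)$, i.e.\ that $\rho_1(\gamma_1(t)) = \rho_2(\gamma_2(t))$. This is automatic because $\rho_1 \circ \gamma_1$ and $\rho_2 \circ \gamma_2$ are \emph{the same} edge-path in $Y$, not merely homotopic ones — here I would use that the $\gamma_i$ can be taken to be the unique edge-paths representing $g$ in the folded graphs $X_i$, ensured by the fact that $\rho_i$ is a combinatorial immersion. Once this lifting is in place, we conclude $H_1 \cap H_2 \subseteq \rho_*\pi_1(X_1 \otimes_Y X_2, (x_1, x_2))$, completing the proof.
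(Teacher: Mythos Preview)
The paper does not prove this theorem; it is quoted from Stallings' 1983 paper and used as a black box, so there is no in-paper proof to compare against. Your argument is the standard one and is essentially correct.

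One small imprecision worth tightening: in the converse direction you write that ``the $\gamma_i$ can be taken to be the unique edge-paths representing $g$ in the folded graphs $X_i$.'' There is no unique edge-path in $X_i$ representing $g$; what you want is the unique \emph{reduced} edge-path. The clean way to phrase it is: pick any closed edge-path $\gamma_i$ in $X_i$ at $x_i$ representing $g$, and reduce it. Because $\rho_i$ is a combinatorial immersion, it sends reduced paths to reduced paths, so $\rho_i\circ\gamma_i$ is the unique reduced word for $g$ in $Y$. Hence $\rho_1\circ\gamma_1$ and $\rho_2\circ\gamma_2$ are literally the same edge-path in $Y$, and the pair $(\gamma_1,\gamma_2)$ is then a bona fide closed path in the fiber product. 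With that adjustment the lifting step goes through without the vague appeal to ``subdividing so the two paths have the same combinatorial length,'' which is unnecessary once both are reduced.
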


In particular, when $Y$ is a bouquet of circles with $\pi_1Y = G$, 
and $(X,x)\to (Y,y)$ is a combinatorial immersion inducing $H = \pi_1X\hookrightarrow G$, 
then for every pair of (not necessarily distinct) vertices $x_1, x_2\in X$, 
the group $\pi_1\left(X\otimes_Y X, (x_1, x_2)\right)$ is an intersection $H^{g_1}\cap H^{g_2}$ for some $g_1, g_2 \in G$. 
In fact, every non-trivial intersection $H\cap H^g$ is equal $\pi_1\left(X\otimes_Y X, (x_1, x_2)\right)$ 
where $x_1 = x$, and $x_2$ is some (possibly the same) vertex in $X$. 
The connected component of $X\otimes_Y X$ containing $(x, x)$ is a copy of $X$, 
which we refer to as a \emph{diagonal component}. 
The group $\pi_1\left(X\otimes_Y X, (x, x)\right)$ is the intersection $H\cap H^g = H$, i.e.\ where $g\in H$. 
A connected component of $X\otimes_Y X$ that has no edges is called \emph{trivial}.
\subsection{Graph of groups and spaces}
We recall the definitions of a graph of groups and a graph of spaces, following \cite{ScottWall79}.

A \emph{graph of spaces} consists of 
\begin{itemize}
\item a graph $\Gamma$, called the \emph{underlying graph},
\item a collection of CW-complexes $X_{v}$ for each $v\in V(\Gamma)$, called \emph{vertex spaces}, 
\item a collection of CW-complexes $X_{e}$ for each $e\in E(\Gamma)$, called \emph{edge spaces}, and
\item a collection of continuous $\pi_1$-injective maps $f_{(e,\pm)}:X_e\to X_{e^{\pm}}$ for each $e\in E(\Gamma)$.
\end{itemize}
The \emph{total space} $X(\Gamma)$ is defined as 
$$X(\Gamma) = \bigsqcup_{v\in V(\Gamma)} X_v \sqcup \bigsqcup_{e\in E(\Gamma)} X_e \times [-1,1]/\sim$$
where $(x,\pm 1)\sim f_{(e, \pm)}(x)$ for $x\in X_e$.

Similarly, a \emph{graph of groups} consists of 
\begin{itemize}
\item the \emph{underlying graph} $\Gamma$,
\item a collection of \emph{vertex groups} $G_{v}$ for each $v\in V(\Gamma)$,
\item a collection of \emph{edge groups} $G_{e}$ for each $e\in E(\Gamma)$, and
\item a collection of injective homomorphisms $\phi_{(e,\pm)}:G_e\to G_{e^{\pm}}$ for each $e\in E(\Gamma)$.
\end{itemize}
The \emph{fundamental group of a graph of groups} is defined as the fundamental group of the graph of spaces $X(\Gamma)$, where $X_v = K(G_v, 1)$ for each $v\in V(\Gamma)$, $X_e=K(G_e, 1)$ for each edge $e\in E(\Gamma)$, and $f_{(e,\pm)}$ induces homomorphism $\phi_{(e,\pm)}$ on the fundamental groups. Note that the fundamental group $\pi_1 \Gamma$ is a subgroup of $G$. 

\subsection{HNN extensions and doubles}
We will denote the \emph{HNN extension of $A$ relative to $\beta:B\to A$} where $B\subseteq A$ by $A*_{B,\beta}$, i.e.\
\[A*_{B,\beta} = \langle A, t\mid t^{-1}xt = \beta(x) \text{ for all }x\in B\rangle.\]
The generator $t$ is called the \emph{stable letter}.
 Note that $A*_{B,\beta}$ can be viewed as a graph of group $G(\Gamma)$ where $\Gamma$ is a single vertex $v$ with a single loop $e$, $G_v = A$, $G_e = B$, $\phi_{(e,-)}$ is the inclusion of $B$ in $A$, and $\phi_{(e,+)}=\beta$. 
 
 A \emph{double of $A$ along $C$ twisted by an automorphism $\beta:C\to C$}, denoted by $D(A,C,\beta)$, 
is an amalgamated product $A*_CA$, where $C$ is mapped to the first factor via the standard inclusion, 
and to the second via the standard inclusion precomposed with $\beta$. 
As usual, $D(A,C,\beta)$ depends only on the outer automorphism class of $\beta$, and not a particular representative.
A double $D(A,C,\beta)$ can be viewed as a graph of groups $G(\Gamma)$ 
where $\Gamma$ is a single edge $e$ with distinct endpoints, $G_{e^{\pm}} = A$, $G_e = C$, 
and $\phi_{(e,-)}$ is the inclusion of $C$ in $A$, and $\phi_{(e,+)}$ is the inclusion precomposed with $\beta$. 
Note that an amalgamated product $A*_CB$ where $[B:C]=2$ has an index two subgroup $D(A,C,\beta)$ 
where $\beta:C\to C$ is conjugation by some (any) representative $g\in B$ of the non-trivial coset of $B/C$.

In both situtaions where there is unique edge in the underlying graph $\Gamma$, we will skip the label $e$, and we will denote $\phi_{(e, \pm)}$ simply by $\phi_{\pm}$. Similarly, in a graph of spaces with a unique edge $e$, we will write $f_{\pm}$ instead of $f_{(e,\pm)}$.

\subsection{Triangle groups}
A \emph{triangle (Coxeter) group} is given by the presentation
\[
W_{MNP} = \langle a,b, c\mid a^2, b^2, c^2, (ab)^M, (bc)^N, (ca)^P\rangle.
\]
The group $W_{MNP}$ acts as a reflection group on
\begin{itemize}
\item the sphere, if $\frac 1M +\frac 1N +\frac 1p >1$,
\item the Euclidean plane,  if $\frac 1M +\frac 1N +\frac 1p =1$,
\item the hyperbolic plane, if $\frac 1M +\frac 1N +\frac 1p <1$.
\end{itemize} 
Hyperbolic triangle groups are commensurable with the fundamental groups of negatively curved surfaces, and therefore they are locally quasiconvex, virtually special, and Gromov-hyperbolic. 
A \emph{von Dyck triangle group} is an index $2$ subgroup of $W_{MNP}$ with the presentation
\[\langle x,y\mid x^M, y^N, (x^{-1}y)^P\rangle\]
obtained by setting $x = ba$ and $y=bc$.

\section{Splittings}\label{sec:splitting}

The goal of this section is to prove \Cref{thm:splitting}.
In \cite{JankiewiczArtinRf} we proved the following.
\begin{thm}[{\cite[Thm B]{JankiewiczArtinRf}}]
The Artin group $\text{Art}_{MNP}$ with $M,N,P\geq 3$ splits as an amalgamated product or an HNN extension of finite rank free groups.
\end{thm}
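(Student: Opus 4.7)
The plan is to build an explicit action of $\text{Art}_{MNP}$ on a Bass--Serre tree with finite-rank free vertex and edge stabilizers. The structural fact that drives everything is that, for $M,N,P\geq 3$, each rank-$2$ parabolic subgroup $\text{Art}_M,\text{Art}_N,\text{Art}_P$ has an infinite cyclic center and is, modulo that center, virtually free (a central extension of a finite dihedral Coxeter group by $\Z$). The associated Coxeter group $W_{MNP}$ is infinite under our hypothesis --- Euclidean for $(3,3,3)$ and hyperbolic otherwise --- which is what makes the construction tractable.

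I would proceed as follows. First, fix a surjection $\phi:\text{Art}_{MNP}\twoheadrightarrow Q$ to a group $Q$ that already has a known splitting as a graph of finite groups with free (or trivial) edge stabilizers. Natural candidates are the Coxeter quotient $W_{MNP}$ together with its reflection action on the Euclidean or hyperbolic plane, or a von Dyck quotient that is (virtually) a free product of cyclic groups. Pulling back the Bass--Serre tree of $Q$ along $\phi$ produces an $\text{Art}_{MNP}$-action on a tree $T$; vertex and edge stabilizers in this action are preimages under $\phi$ of the corresponding stabilizers downstairs.

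Second, I would show that these preimages are finite-rank free groups. This is where the central-extension structure of the rank-$2$ parabolics enters. Modulo the relevant central elements, the preimage of a (dihedral) vertex stabilizer in $Q$ is virtually free; passing to an appropriate finite-index subgroup, or carefully arranging the splitting to cut transversely to the central directions, should upgrade this to being free. The Stallings folding machinery recalled in \Cref{sec:background} is the natural tool to exhibit each vertex stabilizer as the fundamental group of a finite folded graph, which simultaneously yields finite generation and computes the rank.

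The main obstacle will be arranging the splitting so that vertex stabilizers are \emph{actually} free, not merely virtually free, without sacrificing finite generation. In the Euclidean case $(3,3,3)$, Squier's explicit construction is a direct model. In the hyperbolic range a uniform argument is needed; a natural strategy is to first handle the all-even case (where the commutator quotient of each parabolic lands cleanly in $\Z$, yielding a convenient surjection of $\text{Art}_{MNP}$ onto $\Z$ or $\Z^k$ whose kernel is easier to describe), and then reduce the mixed-parity cases to the all-even case by passing to the finite-index subgroup generated by squares of the generators.
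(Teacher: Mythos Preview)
This theorem is quoted from \cite{JankiewiczArtinRf} and is not proved in the present paper, so there is no proof here to compare against directly. The method this paper uses for the companion cases $\text{Art}_{2MN}$ with $M,N\geq 4$ is, however, instructive and quite unlike your outline: one passes to a new generating set $b,\,x=ab,\,y=cb$, observes that in the resulting presentation each relator cell is a cylinder or M\"obius band with respect to the height function given by the $b$--coordinate, and reads the splitting off as an explicit graph of spaces whose vertex and edge spaces are finite graphs. Freeness and finite rank of the vertex and edge groups are then immediate from the Euler characteristics of those graphs; no abstract argument about stabilizers is required.

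Your proposal has a genuine gap already at the first step. Neither $W_{MNP}$ nor its von Dyck subgroup splits as a graph of finite groups when $M,N,P\geq 3$: each acts properly cocompactly on $\mathbb E^2$ or $\mathbb H^2$, hence has virtual cohomological dimension~$2$ and is not virtually free, whereas by Stallings--Dunwoody a finitely generated group is the fundamental group of a graph of finite groups if and only if it is virtually free. So there is no Bass--Serre tree of the kind you describe to pull back. Even granting some other quotient $Q$ with a useful tree action, your second step --- showing that the preimages of vertex stabilizers are finite-rank free --- is precisely the content of the theorem, and the phrase ``should upgrade this to being free'' names the difficulty without resolving it. Finally, the parity reduction does not work as stated: the subgroup of $\text{Art}_{MNP}$ generated by the squares of the standard generators is not an Artin group with doubled labels, so the mixed-parity cases do not reduce to the all-even case in this way.
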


The remaining cases in \Cref{thm:splitting} are $\text{Art}_{2MN}$ where $M,N\geq 4$. The case where $M,N$ are both even is \Cref{prop:all even splitting}, and the other case is \Cref{cor:at least one odd splitting}. We start with considering a non-standard presentation of $\text{Art}_{2MN}$. In the next two subsections we construct the splittings. Then we show that the only spherical Artin groups that split as graphs of free groups are dihedral Artin groups. Spherical Artin group on $3$ generators $\text{Art}_{MNP}$ correspond to one of the following triples $(2,2,P)$ for any $P\geq 2$, $(2,3,3), (2,3,4)$ and $(2,3,5)$. For completeness, in the last subsection we include the proof that the three generator Artin groups with at least one $\infty$ label admit splittings as HNN extensions or amalgamated products of finite rank free groups.

The Artin group $\text{Art}_{236}$ splits as $F_3*_{F_7}F_4$ by Squier \cite{Squier87}.
The only remaining three generator Artin groups are $\text{Art}_{23M}$ where $M\geq 7$, and the following remains unanswered.
\begin{question}
Does the Artin group $\text{Art}_{23M}$ where $M\geq 7$ splits as a graph of finite rank free groups?
\end{question}
We conjecture that the answer is positive. More generally, we ask the following.
\begin{question}
Do all $2$-dimensional Artin group split as a graph of finite rank free groups?
\end{question}

\subsection{Presentations of $\text{Art}_{2MN}$}
Here is the standard presentation of Artin group $\text{Art}_{2MN}$:

$$\langle a,b,c \mid (a,b)_M = (b,a)_M, (b,c)_N = (c,b)_N, ac=ca \rangle.$$

Let $x = ab$ and $y=cb$ and consider a new presentation of $\text{Art}_{2MN}$ with generators $b,x,y$. 
The relation $(a,b)_{M} = (b,a)_M$ is replaced by $bx^mb^{-1} = x^m$ when $M=2m$, 
and by $bx^mb = x^{m+1}$ when $M=2m+1$.
We denote this relation by $r_{M}(b,x)$.
Note that $yx^{-1} = ca^{-1}$, so relation $ac = ca$ can be replaced by $yx^{-1} = bx^{-1}yb^{-1}$. 
See Figure~\ref{fig:horizontal graphs}(a).

This gives us the following presentation 
\begin{equation}\label{eq:2MN presentation}\tag{$*$}\text{Art}_{2MN} = \langle b,x,y \mid r_{M}(b,x), r_{N}(b,y), bx^{-1}yb^{-1}=yx^{-1}\rangle.\end{equation}

Let $X_{2MN}$ be the presentation complex associated to the presentation~\eqref{eq:2MN presentation}. 
Let $X_A$ be the bouquet of two loops labelled by $x$ and $y$.
The complex $X_{2MN}$ can be viewed as a union of the graph $X_A$ and for each relation in ~\eqref{eq:2MN presentation}: a cylinder (for relations $bx^{-1}yb^{-1}=yx^{-1}$ and each $r_{M}(b,x)$ with $M$ even) or a Mobius strip (for each relation $r_{M}(b,x)$ with $M$ odd) with boundary cycles are glued to $X_A$. We can metrize them so that the height of each cylinder/Mobius strip is equal $2$. 

We now define a map $p:X_{2MN}\to [0,1]$ by describing the restriction of $p$ to each cylinder/Mobius strip of $X_{2MN}$. Each point of the cylinder/Mobius strip is mapped to its distance from the center circle of that cylinder/Mobius strip. In particular, the center circle of each cylinder or Mobius strip is mapped to $0$, and the boundary circles of the cylinder or Mobius strip are mapped to $1$. See Figure~\ref{fig:horizontal graphs}(a). 

We can identify $X_A $ with the preimage $p^{-1}(1)$. We define a graph $X_B$ as the union of all the center circles, i.e.\ the preimage $p^{-1}(0)$. We also define a subgraph $X_C$ of $X_{2MN}$ as the preimages $p^{-1}(\frac 12)$. The graphs $X_A, X_B$ and $X_C$ are illustrated in Figure~\ref{fig:horizontal graphs}(b).  The graph $X_C$ has two vertices, which are its intersections with the edge $b$. We denote them by $b_-, b_+$, so that $b_-$, the midpoint of the edge $b$, $b_+$ are ordered consistently with the orientation of the edge $b$.
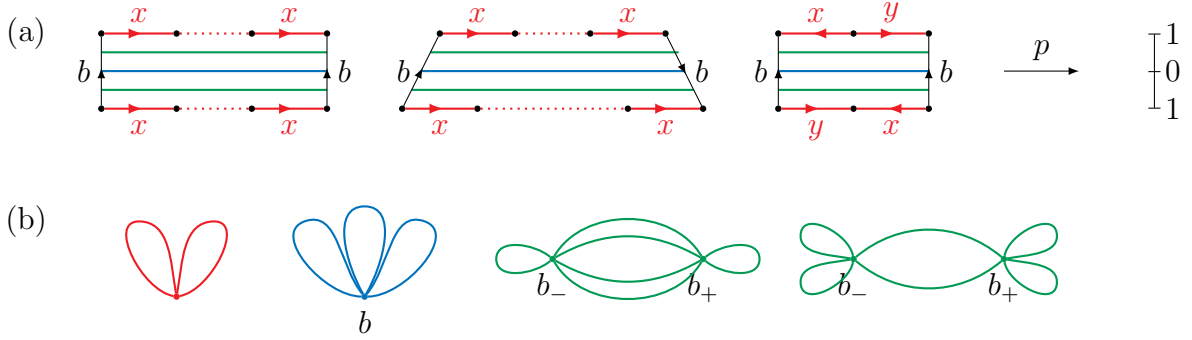
\begin{figure}
\begin{tikzpicture}[decoration={
    markings,
    mark=at position 0.55 with {\arrow{>}}}]
\begin{scope}
\node[draw=none,fill=none] (x) at (-1, 1) {(a)};
\node[circle, draw, fill, inner sep = 0pt,minimum width = 2pt] (a) at (0,0) {};
\node[circle, draw, fill, inner sep = 0pt,minimum width = 2pt] (b) at (0,1) {};
\node[circle, draw, fill, inner sep = 0pt,minimum width = 2pt] (c) at (1,1) {};
\node[circle, draw, fill, inner sep = 0pt,minimum width = 2pt] (d) at (2,1) {};
\node[circle, draw, fill, inner sep = 0pt,minimum width = 2pt] (e) at (3,1) {};
\node[circle, draw, fill, inner sep = 0pt,minimum width = 2pt] (f) at (3,0) {};
\node[circle, draw, fill, inner sep = 0pt,minimum width = 2pt] (g) at (2,0) {};
\node[circle, draw, fill, inner sep = 0pt,minimum width = 2pt] (h) at (1,0) {};
\draw[RoyalBlue, thick] (0,0.5) -- (3, 0.5);
\draw[ForestGreen, thick] (0, 0.25) -- (3,0.25);
\draw[ForestGreen, thick] (0, 0.75) -- (3,0.75);
\draw[postaction={decorate}] (a) -- (b) node[draw=none,fill=none,midway,left] {$b$};
\draw[postaction={decorate}, Red, thick] (b) -- (c) node[draw=none,fill=none,midway,above] {$x$};
\draw[dotted, Red, thick] (c) -- (d) node[draw=none,fill=none,midway,right] {};
\draw[postaction={decorate}, Red, thick] (d) -- (e) node[draw=none,fill=none,midway,above] {$x$};
\draw[postaction={decorate}, Red, thick] (a) -- (h) node[draw=none,fill=none,midway,below] {$x$};
\draw[dotted, Red, thick] (h) -- (g) node[draw=none,fill=none,midway,right] {$$};
\draw[postaction={decorate}, Red, thick] (g) -- (f) node[draw=none,fill=none,midway,below] {$x$};
\draw[postaction={decorate}] (f) -- (e) node[draw=none,fill=none,midway,right] {$b$};
\end{scope}
\begin{scope}[shift={(4,0)}]
\node[circle, draw, fill, inner sep = 0pt,minimum width = 2pt] (a) at (0,0) {};
\node[circle, draw, fill, inner sep = 0pt,minimum width = 2pt] (b) at (0.5,1) {};
\node[circle, draw, fill, inner sep = 0pt,minimum width = 2pt] (c) at (1.5,1) {};
\node[circle, draw, fill, inner sep = 0pt,minimum width = 2pt] (d) at (2.5,1) {};
\node[circle, draw, fill, inner sep = 0pt,minimum width = 2pt] (e) at (3.5,1) {};
\node[circle, draw, fill, inner sep = 0pt,minimum width = 2pt] (f) at (4,0) {};
\node[circle, draw, fill, inner sep = 0pt,minimum width = 2pt] (g) at (3,0) {};
\node[circle, draw, fill, inner sep = 0pt,minimum width = 2pt] (h) at (1,0) {};
\draw[RoyalBlue, thick] (0.25,0.5) -- (3.75, 0.5);
\draw[ForestGreen, thick] (0.125, 0.25) -- (3.875,0.25);
\draw[ForestGreen, thick] (0.375, 0.75) -- (3.675,0.75);
\draw[postaction={decorate}] (a) -- (b) node[draw=none,fill=none,midway,left] {$b$};
\draw[postaction={decorate}, Red, thick] (b) -- (c) node[draw=none,fill=none,midway,above] {$x$};
\draw[dotted, Red, thick] (c) -- (d) node[draw=none,fill=none,midway,right] {};
\draw[postaction={decorate}, Red, thick] (d) -- (e) node[draw=none,fill=none,midway,above] {$x$};
\draw[postaction={decorate}, Red, thick] (a) -- (h) node[draw=none,fill=none,midway,below] {$x$};
\draw[dotted, Red, thick] (h) -- (g) node[draw=none,fill=none,midway,right] {$$};
\draw[postaction={decorate}, Red, thick] (g) -- (f) node[draw=none,fill=none,midway,below] {$x$};
\draw[postaction={decorate}] (e) -- (f) node[draw=none,fill=none,midway,right] {$b$};
\end{scope}
\begin{scope}[shift={(9,0)}]
\node[circle, draw, fill, inner sep = 0pt,minimum width = 2pt] (a) at (0,0) {};
\node[circle, draw, fill, inner sep = 0pt,minimum width = 2pt] (b) at (0,1) {};
\node[circle, draw, fill, inner sep = 0pt,minimum width = 2pt] (c) at (1,1) {};
\node[circle, draw, fill, inner sep = 0pt,minimum width = 2pt] (d) at (2,1) {};
\node[circle, draw, fill, inner sep = 0pt,minimum width = 2pt] (g) at (2,0) {};
\node[circle, draw, fill, inner sep = 0pt,minimum width = 2pt] (h) at (1,0) {};
\draw[RoyalBlue, thick] (0,0.5) -- (2, 0.5);
\draw[ForestGreen, thick] (0, 0.25) -- (2,0.25);
\draw[ForestGreen, thick] (0, 0.75) -- (2,0.75);
\draw[postaction={decorate}] (a) -- (b) node[draw=none,fill=none,midway,left] {$b$};
\draw[postaction={decorate}, Red, thick] (c) -- (b) node[draw=none,fill=none,midway,above] {$x$};
\draw[postaction={decorate}, Red, thick] (c) -- (d) node[draw=none,fill=none,midway,above] {$y$};
\draw[postaction={decorate}, Red, thick] (a) -- (h) node[draw=none,fill=none,midway,below] {$y$};
\draw[postaction={decorate}, Red, thick] (g) -- (h) node[draw=none,fill=none,midway,below] {$x$};
\draw[postaction={decorate}] (g) -- (d) node[draw=none,fill=none,midway,right] {$b$};
\end{scope}

\begin{scope}[shift={(14,0)}]
\draw[->] (-2,0.5) -- (-1,0.5) node[draw=none,fill=none,midway,above] {$p$};
\node[draw=none,fill=none,right] (a) at (0,0) {$1$};
\node[draw=none,fill=none,right] (b) at (0,0.5) {$0$};
\node[draw=none,fill=none,right] (c) at (0,1) {$1$};
\draw[|-|] (0,0) -- (0,0.5);
\draw[-|] (0,0.5) -- (0,1);
\end{scope}
\begin{scope}[shift={(1,-2.5)}]
\node[draw=none,fill=none] (x) at (-2, 1) {(b)};
\node[circle, draw, fill, inner sep = 0pt,minimum width = 2pt, Red] (a) at (0,0) {};
\draw[
Red, thick] (a) to[out = 80, in =180] (0.5,1) to[out=0, in =10] (a);
\draw[
Red, thick] (a) to[out = 100, in =0] (-0.5,1) to[out=180, in =170] (a) ;
\end{scope}
\begin{scope}[shift={(3.5,-2.5)}]
\node[circle, draw, fill, inner sep = 0pt,minimum width = 2pt, RoyalBlue, label=below:$b$] (a) at (0,0) {};
\draw[RoyalBlue, thick] (a) to[out = 120, in =180] (0,1.2) to[out=0, in =60] (a);
\draw[RoyalBlue, thick] (a) to[out = 50, in =160] (0.8,1) to[out=-20, in =0] (a);
\draw[RoyalBlue, thick] (a) to[out = 130, in =20] (-0.8,1) to[out=200, in =180] (a);
\end{scope}
\begin{scope}[shift={(6,-2)}]
\node[circle, draw, fill, inner sep = 0pt,minimum width = 2pt,ForestGreen, label=below:$b_-$] (-) at (0,0) {};
\node[circle, draw, fill, inner sep = 0pt,minimum width = 2pt,ForestGreen, label=below:$b_+$] (+) at (2,0) {};
\draw[ForestGreen, thick] (-) to[out = 30, in =150] (+);
\draw[ForestGreen, thick] (-) to[out = 60, in =120] (+);
\draw[ForestGreen, thick] (-) to[out = -30, in =210] (+);
\draw[ForestGreen, thick] (-) to[out = -60, in =240] (+);
\draw[ForestGreen, thick] (-) to[out = 140, in =90] (-0.75,0) to[out = 270, in =220] (-);
\draw[ForestGreen, thick] (+) to[out = 40, in =90] (2.75,0) to[out = 270, in =-40] (+);
\end{scope}
\begin{scope}[shift={(10,-2)}]
\node[circle, draw, fill, inner sep = 0pt,minimum width = 2pt,ForestGreen, label=below:$b_-$] (-) at (0,0) {};
\node[circle, draw, fill, inner sep = 0pt,minimum width = 2pt,ForestGreen, label=below:$b_+$] (+) at (2,0) {};
\draw[ForestGreen, thick] (-) to[out = 40, in =140] (+);
\draw[ForestGreen, thick] (-) to[out = -40, in =220] (+);
\draw[ForestGreen, thick] (+) to[out = -10, in =80] (2.7,-0.35) to[out=260, in =280] (+);
\draw[ForestGreen, thick] (+) to[out = 10, in =280] (2.7,0.35) to[out=100, in =80] (+) ;
\draw[ForestGreen, thick] (-) to[out = 190, in =100] (-0.7, -0.35) to[out=280, in =260] (-);
\draw[ForestGreen, thick] (-) to[out = 170, in =260] (-0.7,0.35) to[out=80, in =100] (-) ;
\end{scope}
\end{tikzpicture}
\caption{ (a) The relation $R_M(b,x)$ where $M$ is even (left) and odd (middle), and the relation $bx^{-1}yb^{-1}=yx^{-1}$ (right), with the projection $p$ of the cells onto the interval $[0,1]$. The horizontal graphs $X_A, X_B, X_C$ are the preimages $p^{-1}(1), p^{-1}(0), p^{-1}(\frac 12)$ respectively. (b) Graphs $X_A, X_B$ and two versions of $X_C$ depending on whether $M,N$ are both odd (left green), or one of $M,N$ is even (right green).}
\label{fig:horizontal graphs}
\end{figure}
When $M,N$ are both even, then the graph $X_C$ is not connected. Indeed, each of its connected components is a copy of $X_B$. We denote the connected component containing the vertex $b_-$ by $X_B^-$, and the component containing the vertex $b_+$ by $X_B^+$. 
Otherwise, if at least one $M,N$ is odd, then $X_C$ is a connected double cover of $X_B$. In the next two sections, we describe the graph of spaces decomposition of $X_{2MN}$ associated to the map $p$, and the induced graph of groups decomposition of $\text{Art}_{2MN}$. We consider separately the case where $M,N$ are both even, and the case where at least one of them is odd.

\subsection{Both even}
In the case where both $M,N$ are even $\geq 4$, Presentation~\eqref{eq:2MN presentation} of $\text{Art}_{2MN}$ is the standard presentation of an HNN-extension. 
\begin{prop}\label{prop:all even splitting}
Let $M = 2m$ and $N=2n$ be both even and $\geq 4$. Then $\text{Art}_{2MN}$ splits as an HNN-extension $A*_{B,\beta}$ where $A=\langle x,y\rangle\simeq F_2$ and $B=\langle x^m, y^n, x^{-1}y\rangle\simeq F_3$ and $\beta:B\to A$ is given by $\beta(x^m) = x^m$, $\beta(y^n) = y^n$ and $\beta(x^{-1}y) = yx^{-1}$.
\end{prop}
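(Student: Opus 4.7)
The plan is to read the HNN structure off from Presentation~\eqref{eq:2MN presentation} directly. With $M=2m$ and $N=2n$ the three defining relations become $bx^mb^{-1}=x^m$, $by^nb^{-1}=y^n$, and $b(x^{-1}y)b^{-1}=yx^{-1}$; each has the form $b\xi b^{-1}=\beta(\xi)$ with $\xi$ ranging over the three elements $x^m,y^n,x^{-1}y$ of $A=\langle x,y\rangle$, and taking $t=b^{-1}$ rewrites them as $t^{-1}\xi t=\beta(\xi)$ in the paper's HNN convention. Consequently, once $B=\langle x^m,y^n,x^{-1}y\rangle$ and $\beta(B)=\langle x^m,y^n,yx^{-1}\rangle$ are known to be free of rank $3$ on the listed generators, $\beta$ extends uniquely to an isomorphism $B\to\beta(B)\le A$, the HNN extension $A*_{B,\beta}$ is well-defined, and its defining relations are exactly those of \eqref{eq:2MN presentation}; hence $A*_{B,\beta}\cong\text{Art}_{2MN}$.

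The freeness claims I would verify via Stallings foldings. Let $Y$ be the bouquet of oriented $x$- and $y$-loops, so $\pi_1 Y=F(x,y)=A$. Build a pre-Stallings graph for $B$ by attaching at a basepoint $\ast$ three immersed loops: an $m$-cycle of $x$-edges through vertices $\ast,u_1,\ldots,u_{m-1}$, an $n$-cycle of $y$-edges through vertices $\ast,v_1,\ldots,v_{n-1}$, and a two-edge loop modelling $x^{-1}y$ through an auxiliary vertex $w$, with its $x$- and $y$-edges both oriented $w\to\ast$. The only folds occur at $\ast$: the two incoming $x$-edges (from $u_{m-1}$ and from $w$) force $u_{m-1}=w$, and the two incoming $y$-edges force $v_{n-1}=w$. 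Because $m,n\ge 2$, the merged vertex $u_{m-1}=v_{n-1}=w$ has one incoming and one outgoing edge of each label, and no further folds are possible. The resulting folded graph has $m+n-2$ vertices and $m+n$ edges, hence Euler characteristic $-2$ and rank $3$; by \cite{Stallings83} this is the Stallings graph of $B$, and the images of the three original loops form a free basis, so $B\cong F_3$ on the stated generators. The identical calculation with the two auxiliary edges reversed (so they point $\ast\to w'$, modelling $yx^{-1}$) gives $\beta(B)\cong F_3$ on its stated basis.

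The main obstacle is the Stallings folding count: one must verify that no folds beyond the two forced identifications occur, since an extra fold would drop the Euler characteristic, indicate a hidden relation among the chosen generators, and obstruct $\beta$ from being a well-defined isomorphism of rank-$3$ free groups. Once this count is in hand, the splitting follows immediately from the matching presentations.
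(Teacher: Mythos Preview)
Your proof is correct and follows essentially the same route as the paper: read the HNN structure directly off Presentation~\eqref{eq:2MN presentation} with $b$ (or $b^{-1}$) as stable letter. The paper's own proof is literally one sentence to that effect. Your addition is an explicit Stallings-folding verification that $B=\langle x^m,y^n,x^{-1}y\rangle$ is free of rank~$3$ on those generators; the paper records exactly this folded graph as $\bar X_B^{\pm}$ in Figure~\ref{fig:both even maps} (in the alternative graph-of-spaces formulation immediately following the proposition) but does not spell out the count in the text. Your observation that one needs $t=b^{-1}$ rather than $t=b$ to match the paper's stated HNN convention $t^{-1}\xi t=\beta(\xi)$ is a fair nitpick; the paper glosses over this.
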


\begin{proof}
Presentation~\eqref{eq:2MN presentation} is the standard presentation of the HNN-extension $A*_{B,\beta}$ with the stable letter $b$.
\end{proof}

Alternatively, the splitting of $\text{Art}_{2MN}$ as above, can be deduced from a graph of spaces decomposition of $X_{2MN}$. 

\begin{prop}\label{prop:all even splitting}
Let $M = 2m$ and $N=2n$ be both even and $\geq 4$. Then $X_{2MN}$ is a graph of spaces $X(\Gamma)$ where $\Gamma$ is a single vertex with a single loop. The vertex space is the graph $X_A$, the edge space is the graph $X_B$ and the two maps $X_B\to X_A$ are given in the Figure~\ref{fig:both even maps}.
\end{prop}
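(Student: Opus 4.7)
The plan is to use the projection $p\colon X_{2MN}\to[0,1]$ as a direct witness of the desired graph-of-spaces structure. The starting observation is that when $M=2m$ and $N=2n$ are both even, every relator of Presentation~\eqref{eq:2MN presentation} has the commutator form $bwb^{-1}(w')^{-1}$: explicitly, $bx^mb^{-1}x^{-m}$, $by^nb^{-1}y^{-n}$, and $b(x^{-1}y)b^{-1}(yx^{-1})^{-1}$. Consequently, each of the three $2$-cells of $X_{2MN}$, once its two $b$-arcs are identified along the single $b$-edge, is an honest cylinder $S^1\times[-1,1]$ (no Mobius strip occurs), and $p$ restricted to this cylinder is the absolute value of the second coordinate. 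The midpoint of the $b$-edge lies on the center circle $S^1\times\{0\}$ of every such cylinder.

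With the cylindrical model in hand, the pieces read off tautologically. The vertex space $X_A=p^{-1}(1)$ is the bouquet of the two loops $x,y$ at the unique $0$-cell $v_0$. The edge space $X_B=p^{-1}(0)$ is a bouquet of three loops at the midpoint of the $b$-edge, one center circle per relation. I would then observe that the closure of $p^{-1}((0,1))$ is naturally homeomorphic to $X_B\times[-1,1]$: the three cylinders assemble along their common interval through the midpoint of $b$, which after its endpoints are glued to $v_0$ reassembles the $b$-edge. Hence $X_{2MN}=X_A\sqcup(X_B\times[-1,1])/{\sim}$ is the total space of a graph of spaces whose underlying graph has a single vertex with a single loop, as required.

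It remains to identify the attaching maps $f_\pm\colon X_B\to X_A$ with those of Figure~\ref{fig:both even maps}. Reading off the top and bottom boundary circles of each cylinder, both $f_+$ and $f_-$ send the $r_M$-loop to $x^m$ and the $r_N$-loop to $y^n$ (since the two boundaries of a commutator relator coincide), while for the third cylinder one boundary is $x^{-1}y$ and the other is $yx^{-1}$, so $f_+$ and $f_-$ send its center loop to these two words respectively. The only real obstacle is bookkeeping of orientations on the three center circles so that the prescribed labels match the figure; once these are fixed, the $\pi_1$-injectivity of $f_\pm$ is automatic because the images $x^m,y^n,x^{-1}y,yx^{-1}$ are cyclically reduced in $\pi_1 X_A=F(x,y)$, and the resulting graph-of-spaces data agrees with $X_{2MN}$ cell by cell.
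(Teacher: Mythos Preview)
Your argument follows the same route as the paper's: read the graph-of-spaces structure directly off the height map $p$. The paper packages this a bit more cleanly by observing that $p$ factors as $X_{2MN}\xrightarrow{\tilde p} S^1=[-1,1]/(-1\sim 1)\to[0,1]$, where $\tilde p$ sends the $b$-edge isometrically onto $S^1$ and the second map is the absolute value. Then $\Gamma$ is literally this $S^1$ with its one-vertex, one-edge cell structure, the vertex space is $\tilde p^{-1}(1)=X_A$, and every other fiber is a copy of $X_B$. This avoids your phrase ``the closure of $p^{-1}((0,1))$ is naturally homeomorphic to $X_B\times[-1,1]$,'' which is not literally true: that closure is all of $X_{2MN}$. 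What you mean is that cutting $X_{2MN}$ open along $X_A$ yields $X_B\times[-1,1]$, and $\tilde p$ is the tidy way to say this.

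There is one genuine slip. The $\pi_1$-injectivity of $f_\pm$ does \emph{not} follow from the three image words being cyclically reduced; for instance, a bouquet of three circles sent to $x$, $x^2$, $y$ has cyclically reduced images but the induced map on $\pi_1$ has rank-$2$ image. What you need is that the Stallings fold of each map $X_B\to X_A$ preserves Euler characteristic, which is exactly what Figure~\ref{fig:both even maps} displays; this genuinely uses $m,n\geq 2$ (if $m=1$ the fold collapses rank). The paper leaves this implicit in the figure, but the reason you give would not stand on its own.
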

\begin{proof}
Indeed, the map $p$ factors as $X_{2MN} \xrightarrow{\tilde p} S^1 = [-1,1]/{(-1\sim 1)} \to [0,1]$ where the second map is the absolute value, and where $\tilde p$ sends the loop $b$ isometrically onto $S^1$ and is extended linearly. 
By construction, the preimage $\tilde p^{-1}(t)$ is homeomorphic $X_B$ when $t\in (-1,1)$, and to $X_A$ when $t=1$.
In particular, $X_{2MN}$ can be expressed as a graph of spaces, induced by $\tilde p$, where the cellular structure of $S^1$ consists of a single vertex $v=1$ and a single edge $e$. Indeed, 
$$X_{2MN} = X_A\cup X_B\times[-1,1]/(x,-1)\sim f_{-}(x), (x,1)\sim f_{+}(x)$$
where $f_{-}, f_{+}:X_B\to X _A$ are the two maps obtained by ``pushing'' the graph $X_B$ in \Cref{fig:horizontal graphs}(a) ``upwards'' and ``downwards'' respectively. See Figure~\ref{fig:both even maps} for $f_{-}, f_{+}$ expressed as a composition of Stallings fold and a combinatorial immersion. 
\end{proof}

\begin{figure}
\begin{tikzpicture}
\begin{scope}[shift={(0,0)},decoration={
    markings,
    mark=at position 0.75 with {\arrow{>}}}]
\node[circle, draw, fill, inner sep = 0pt,minimum width = 2pt] (a) at (0,0) {};
\draw[RoyalBlue, thick] (a) to[out = 120, in =180] (0,1.2) to[out=0, in =60] (a);
\draw[RoyalBlue, thick] (a) to[out = 50, in =160] (0.8,1) to[out=-20, in =0] (a);
\draw[RoyalBlue, thick] (a) to[out = 130, in =20] (-0.8,1) to[out=200, in =180] (a);
\end{scope}
\begin{scope}[shift={(1.25,1)}]
\draw[->] (0,0) -- (0.8,0.5);
\end{scope}
\begin{scope}[shift={(3.5,1)},decoration={
    markings,
    mark=at position 0.65 with {\arrow{>}}}]
\node[circle, draw, fill, inner sep = 0pt,minimum width = 2pt] (a) at (0,0) {};
\node[circle, draw, fill, inner sep = 0pt,minimum width = 1pt] (b) at (0,1.2) {};
\draw[postaction={decorate},>=stealth, DarkOrchid, thick] (b) to[out = 180, in =120] (a);
\draw[postaction={decorate},>=stealth, LimeGreen, thick] (b) to[out=0, in=60] (a);
\draw[postaction={decorate},>=stealth,DarkOrchid, thick] (a) to[out = 50, in =160] (0.8,1) to[out=-20, in =0] (a);
\draw[postaction={decorate},>=stealth,LimeGreen, thick] (a) to[out = 130, in =20] (-0.8,1) to[out=200, in =180] (a);\node[LimeGreen, draw=none,fill=none] (x) at (-1, 0.3) {\tiny$m$};
\node[DarkOrchid, draw=none,fill=none] (x) at (1, 0.3) {\tiny$n$};
\end{scope}
\begin{scope}[shift={(1.25,0)}]
\draw[->] (0,0) -- (0.8,-0.5);
\end{scope}
\begin{scope}[shift={(3.5,-1)},decoration={
    markings,
    mark=at position 0.75 with {\arrow{>}}}]
\node[circle, draw, fill, inner sep = 0pt,minimum width = 2pt] (a) at (0,0) {};
\node[circle, draw, fill, inner sep = 0pt,minimum width = 1pt] (b) at (0,1.2) {};
\draw[postaction={decorate},>=stealth,LimeGreen, thick] (a) to[out = 120, in =180] (b);
\draw[postaction={decorate},>=stealth, DarkOrchid, thick] (a) to[out=60, in=0] (b);
\draw[postaction={decorate},>=stealth,DarkOrchid, thick] (a) to[out = 50, in =160] (0.8,1) to[out=-20, in =0] (a);
\draw[postaction={decorate},>=stealth,LimeGreen, thick] (a) to[out = 130, in =20] (-0.8,1) to[out=200, in =180] (a);\node[LimeGreen, draw=none,fill=none] (x) at (-1, 0.3) {\tiny$m$};
\node[DarkOrchid, draw=none,fill=none] (x) at (1, 0.3) {\tiny$n$};
\end{scope}
\begin{scope}[shift={(6,1.5)}]
\draw[->] (-1,0) -- (0,0);
\end{scope}
\begin{scope}[shift={(7,1)},decoration={
    markings,
    mark=at position 0.65 with {\arrow{>}}}]
\node[circle, draw, fill, inner sep = 0pt,minimum width = 2pt] (a) at (0,0) {};
\node[circle, draw, fill, inner sep = 0pt,minimum width = 1pt] (b) at (0,1.2) {};
\draw[postaction={decorate},>=stealth,LimeGreen, thick] (b) to[in = 120, out =240] (a);
\draw[postaction={decorate},>=stealth, DarkOrchid, thick] (b) to[in=60, out=-60] (a);
\draw[postaction={decorate},>=stealth,LimeGreen, thick] (a) to[in = 180, out =180] (b);
\draw[postaction={decorate},>=stealth,DarkOrchid, thick] (a) to[in = 0, out =0] (b);
\node[LimeGreen, draw=none,fill=none] (x) at (-0.9, 0.8) {\tiny$m-1$};
\node[DarkOrchid, draw=none,fill=none] (x) at (0.9, 0.8) {\tiny$n-1$};
\end{scope}
\begin{scope}[shift={(6,-0.5)}]
\draw[->] (-1,0) -- (0,0);
\end{scope}
\begin{scope}[shift={(7,-1)},decoration={
    markings,
    mark=at position 0.65 with {\arrow{>}}}]
\node[circle, draw, fill, inner sep = 0pt,minimum width = 2pt] (a) at (0,0) {};
\node[circle, draw, fill, inner sep = 0pt,minimum width = 1pt] (b) at (0,1.2) {};
\draw[postaction={decorate},>=stealth,LimeGreen, thick] (a) to[out = 120, in =240] (b);
\draw[postaction={decorate},>=stealth, DarkOrchid, thick] (a) to[out=60, in=-60] (b);
\draw[postaction={decorate},>=stealth,LimeGreen, thick] (b) to[out = 180, in =180] (a);
\draw[postaction={decorate},>=stealth,DarkOrchid, thick] (b) to[out = 0, in =0] (a);
\node[LimeGreen, draw=none,fill=none] (x) at (-0.8, 0.3) {\tiny$m-1$};
\node[DarkOrchid, draw=none,fill=none] (x) at (0.8, 0.3) {\tiny$n-1$};
\end{scope}
\begin{scope}[shift={(8,1.5)}]
\draw[->] (0,0) -- (0.8,-0.5);
\end{scope}
\begin{scope}[shift={(8,-0.5)}]
\draw[->] (0,0) -- (0.8,0.5);
\end{scope}
\begin{scope}[shift={(10,0)},decoration={
    markings,
    mark=at position 0.75 with {\arrow{>}}}]
\node[circle, draw, fill, inner sep = 0pt,minimum width = 2pt] (a) at (0,0) {};
\draw[postaction={decorate},DarkOrchid, thick,>=stealth] (a) to[out = 80, in =180] (0.5,1) to[out=0, in =10] (a);
\draw[postaction={decorate},LimeGreen, thick, >=stealth] (a) to[out = 100, in =0] (-0.5,1) to[out=180, in =170] (a) ;
\end{scope}
\end{tikzpicture}
\caption{The map $f_{-}:X_B\to X_B^- \to \bar X_B^{-}\to X_A$ (top), and $f_{+}:X_B\to X_B^+\to \bar X_B^{+}\to X_A$ (bottom).}\label{fig:both even maps}
\end{figure}

\begin{remark}\label{rem:C conjugate}
The subgroup $B$ and $\beta(B)$ are conjugate. See Figure~\ref{fig:both even maps}. Indeed, the graphs $\bar X_B^-$ and $\bar X_B^+$ are identical (but have different basepoints).
\end{remark}

\begin{exa}[Group $\text{Art}_{244}$]\label{exa:244}
In the case where $M=N=4$, Proposition~\ref{prop:all even splitting} provides the splitting of $\text{Art}_{244} = A*_{B, \beta}$ where $A= \langle x,y\rangle$ and $B=\langle x^2, y^2, x^{-1}y \rangle$, and $\beta:B\to B$ is given by $\beta(x^2) = x^2, \beta(y^2) = y^2, \beta(x^-1y) = yx^{-1}$. In particular, $B$ has index $2$ in $A$. This splitting was first proven by Squier \cite{Squier87}.
\end{exa}
\subsection{At least one odd}
We now assume that at least one $M,N$ is odd. We have the following description of the complex $X_{2MN}$.

\begin{prop}\label{prop:splitting of X}
The complex $X_{2MN}$ is a graph of spaces with the underlying graph is an interval the vertex spaces are graphs $X_A$ and $X _B$, and the edge space is $X_C$. 
The attaching map $X_C\to X_B$ is a double cover, 
and the attaching map $X_C\to X_A$ factors as $X_C\to \bar X_C\to X_A$, illustrated in \Cref{fig:CtoA}, 
where the first map is a homotopy equivalence and the second map is a combinatorial immersion.
\end{prop}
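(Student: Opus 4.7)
The plan is to mimic the approach of \Cref{prop:all even splitting}, except that since at least one of $M,N$ is odd, the map $p$ no longer factors through $S^1$ (the Mobius strip prevents this). Instead, I would cut $X_{2MN}$ along the intermediate graph $X_C=p^{-1}(\tfrac 12)$ to obtain a graph of spaces with underlying graph an interval $\Gamma$, having endpoints $v_A,v_B$ joined by a single edge $e$, and with $X_{v_A}=X_A$, $X_{v_B}=X_B$, $X_e=X_C$. Each $2$-cell of $X_{2MN}$ (cylinder or Mobius strip) deformation retracts onto the part of $X_B$ it contains when restricted to $p^{-1}([0,\tfrac 12])$, and onto the part of $X_A$ it contains when restricted to $p^{-1}([\tfrac 12,1])$. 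These retractions determine the attaching maps $f_-\colon X_C\to X_B$ and $f_+\colon X_C\to X_A$, and it is immediate from the construction that the resulting total space $X(\Gamma)$ is homeomorphic to $X_{2MN}$.

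Next I would verify that $f_-\colon X_C\to X_B$ is a double cover by inspecting each $2$-cell individually. A cylinder (the commutation relation and every even $r_M(b,x)$ or $r_N(b,y)$) contributes two parallel circles in $X_C$, each mapping homeomorphically onto the corresponding center circle of $X_B$. A Mobius strip (an odd relation) contributes a single circle in $X_C$ that double covers the center circle of $X_B$. Because at least one relation is a Mobius strip, the identifications at $b_-$ and $b_+$ glue the local pieces into a single connected graph $X_C$, and the total map $f_-$ is a degree-$2$ covering of $X_B$, as one can read off directly from \Cref{fig:horizontal graphs}(b).

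Finally I would factor $f_+\colon X_C\to X_A$ through its Stallings folding $X_C\to \bar X_C\to X_A$, where the second map is a combinatorial immersion. The main point to check is that $X_C\to \bar X_C$ is a homotopy equivalence: equivalently, each intermediate fold identifies a pair of edges sharing an initial vertex but with distinct terminal vertices, so that no new essential loop is created. This reduces to a direct combinatorial verification on the small graph $X_C$, carried out in two subcases depending on the parities of $M$ and $N$ (one odd and one even versus both odd), using the explicit description of $X_C$ in \Cref{fig:horizontal graphs}(b). I expect this last step to be the principal obstacle, but it is purely a combinatorial check; once completed, the resulting picture of $\bar X_C$ mapping into $X_A$ gives the rest of the statement.
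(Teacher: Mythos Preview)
Your proposal is correct and follows essentially the same route as the paper: cut $X_{2MN}$ along $X_C=p^{-1}(\tfrac12)$, read off the double cover $X_C\to X_B$ cell by cell, and factor $X_C\to X_A$ through its Stallings folding. The only difference is in the last step: rather than tracking each fold to confirm it has distinct terminal vertices, the paper simply observes that $\bar X_C$ (drawn explicitly, folded provided $m-1,n-1>0$) has $\rank\pi_1\bar X_C=5=\rank\pi_1 X_C$, so the folding map is automatically a homotopy equivalence --- a shortcut you may prefer to the combinatorial check you outlined.
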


\begin{proof}Indeed, $X_{2MN}$ can be obtained as a union of $X_A, X_B$ and $X_C\times[0,1]$ where $X_C\times\{1\}$ is glued to $X_A$ and $X_C\times\{0\}$ is glued to $X_B$.
Note that the preimage $p^{-1}([0,\frac 12])$ is a union of ``half'' cylinders and Mobius strip, and its boundary is the graph $X_C$. The projection onto the center circle of the boundary of each cylinder or Mobius strip is a (connected or not) double cover of the center circle. It follows that $X_C\to X_B$ is a double cover. 
The map $X_C\to X_A$ is induced by ``pushing'' $X_C$ ``downwards'' and ``upwards'' onto $X_A$, and it can be described by the labelling of the right graphs in \Cref{fig:CtoA}. The factorization $X_C\to \bar X_C$ is obtained by performing Stallings folds. Note that the middle graphs in \Cref{fig:CtoA} are fully folded, provided that $m-1, n-1>0$, which is equivalent to the condition that $M,N\geq 4$. It follows that the map $\bar X_C\to X_A$ is a combinatorial immersion. Since the rank of $\pi_1X_C$ and $\pi_1\bar X_C$ are both equal $5$, the folding map is a homotopy equivalence.
\end{proof}

\begin{cor}\label{cor:at least one odd splitting}
Suppose at most one of $M,N$ is even and $M,N\geq 4$. Then $\text{Art}_{2MN}$ splits as a free product with amalgamation $A*_CB$ where $A=F_2$ and $B=F_3$, and $C=F_5$. 
\end{cor}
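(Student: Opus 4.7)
The plan is to deduce the corollary directly from Proposition~\ref{prop:splitting of X}, which already provides a graph of spaces decomposition of $X_{2MN}$ with underlying graph a single edge between two distinct vertices, vertex spaces $X_A$ and $X_B$, and edge space $X_C$. The attaching map $X_C\to X_B$ is a double cover, and the map $X_C\to X_A$ factors as a homotopy equivalence followed by a combinatorial immersion. To convert this topological decomposition into the claimed amalgamated product, I only need to identify the fundamental groups of the three graphs and verify that the attaching maps are $\pi_1$-injective.

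First I would read off the ranks from Figure~\ref{fig:horizontal graphs}(b). The graph $X_A$ is a wedge of two circles labelled $x,y$, so $\pi_1 X_A\cong F_2$. The graph $X_B$ is a wedge of three circles based at $b$, so $\pi_1 X_B\cong F_3$. Under the assumption that at least one of $M,N$ is odd, the graph $X_C$ is a connected double cover of $X_B$, so by multiplicativity of Euler characteristic $\chi(X_C) = 2\chi(X_B) = -4$, giving $\pi_1 X_C\cong F_5$.

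Next I would check $\pi_1$-injectivity of the two attaching maps, which is already implicit in Proposition~\ref{prop:splitting of X}: covering maps are injective on $\pi_1$, and a homotopy equivalence composed with a combinatorial immersion is also $\pi_1$-injective. The graph of spaces decomposition therefore yields a graph of groups decomposition with vertex groups $A=F_2$ and $B=F_3$, edge group $C=F_5$, and two injective edge morphisms. Since the underlying graph is a single edge joining distinct vertices, its fundamental group is exactly the amalgamated product $A*_C B$, and by construction this equals $\pi_1 X_{2MN}=\text{Art}_{2MN}$.

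Since the structural work has been carried out in Proposition~\ref{prop:splitting of X}, this corollary is essentially a matter of bookkeeping and there is no substantive obstacle; the only place one must be careful is in justifying that the connected double cover assertion really applies, which is where the hypothesis $M,N\geq 4$ (needed to ensure $\bar X_C\to X_A$ is a combinatorial immersion rather than acquiring further folds) and the parity assumption on $M,N$ both enter.
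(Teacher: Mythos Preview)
Your proof is correct and follows essentially the same approach as the paper: both deduce the amalgamated product directly from Proposition~\ref{prop:splitting of X} and read off the ranks of $A$, $B$, $C$ from Figure~\ref{fig:horizontal graphs}. Your version is slightly more explicit in justifying the rank of $C$ via Euler characteristic and in checking $\pi_1$-injectivity of the attaching maps, but the argument is the same.
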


\begin{proof}
This directly follows from \Cref{prop:splitting of X}. We get that $\text{Art}_{2MN} = \pi_1X_{2MN} = A*_CB$ where 
$A = \pi_1X_A$, $B=\pi_1X_B$ and $C=\pi_1X_C$. From \Cref{fig:horizontal graphs}, we see that $\rank A = 2$, $\rank B = 3$ and $\rank C = 5$.
\end{proof}

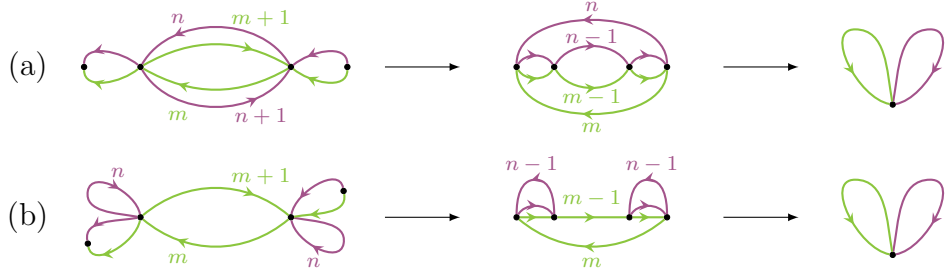
\begin{figure}
\begin{tikzpicture}
\begin{scope}[shift={(-5.25,0.5)},decoration={
    markings,
    mark=at position 0.75 with {\arrow{>}}}]
\node[draw=none,fill=none] (x) at (-1.5, 0) {(a)};
\node[circle, draw, fill, inner sep = 0pt,minimum width = 2pt] (-) at (0,0) {};
\node[circle, draw, fill, inner sep = 0pt,minimum width = 2pt] (+) at (2,0) {};
\node[circle, draw, fill, inner sep = 0pt,minimum width = 2pt] (a) at (-0.75,0) {};
\node[circle, draw, fill, inner sep = 0pt,minimum width = 2pt] (b) at (2.75,0) {};

\draw[LimeGreen, thick, postaction={decorate},>=stealth] (-) to[out = 30, in =150] (+);
\draw[LimeGreen, thick, postaction={decorate},>=stealth] (+) to[in = -30, out =210] (-);
\node[LimeGreen, draw=none,fill=none] (x) at (1.6, 0.65) {\tiny$m+1$};
\node[LimeGreen, draw=none,fill=none] (x) at (0.5, -0.65) {\tiny$m$};

\draw[DarkOrchid, thick, postaction={decorate},>=stealth] (+) to[in = 60, out =120] (-);
\draw[DarkOrchid, thick, postaction={decorate},>=stealth] (-) to[out = -60, in =240] (+);
\node[DarkOrchid, draw=none,fill=none] (x) at (0.5, 0.65) {\tiny$n$};
\node[DarkOrchid, draw=none,fill=none] (x) at (1.6, -0.65) {\tiny$n+1$};

\draw[DarkOrchid, thick, postaction={decorate},>=stealth] (-) to[out = 140, in =90] (a);
\draw[LimeGreen, thick, postaction={decorate},>=stealth] (-) to[in = 270, out =220] (a);
\draw[DarkOrchid, thick, postaction={decorate},>=stealth] (b) to[in = 40, out =90] (+);
\draw[LimeGreen, thick, postaction={decorate},>=stealth] (b) to[out = 270, in =-40] (+);
\end{scope}
\begin{scope}[shift={(-1,0.5)}]
\draw[->] (-1,0) -- (0,0);
\end{scope}
\begin{scope}[shift={(-0.25,0.5)}, decoration={
    markings,
    mark=at position 0.55 with {\arrow{>}}}]
\node[circle, draw, fill, inner sep = 0pt,minimum width = 2pt] (-) at (0,0) {};
\node[circle, draw, fill, inner sep = 0pt,minimum width = 2pt] (p) at (0.5,0) {};
\node[circle, draw, fill, inner sep = 0pt,minimum width = 2pt] (q) at (1.5,0) {};
\node[circle, draw, fill, inner sep = 0pt,minimum width = 2pt] (+) at (2,0) {};

\draw[DarkOrchid, thick, postaction={decorate},>=stealth] (+) to[in = 90, out =90] (-);
\node[DarkOrchid, draw=none,fill=none] (x) at (1, 0.8) {\tiny$n$};
\draw[LimeGreen, thick, postaction={decorate},>=stealth] (+) to[in = -90, out =-90] (-);
\node[LimeGreen, draw=none,fill=none] (x) at (1, -0.8) {\tiny$m$};

\node[DarkOrchid, draw=none,fill=none] (x) at (1, 0.4) {\tiny$n-1$};
\node[LimeGreen, draw=none,fill=none] (x) at (1, -0.4) {\tiny$m-1$};

\draw[DarkOrchid, thick, postaction={decorate},>=stealth] (-) to[in=120, out=60] (p);
\draw[DarkOrchid, thick, postaction={decorate},>=stealth] (q) to[in=120, out=60] (+);
\draw[DarkOrchid, thick, postaction={decorate},>=stealth] (p) to[in=120, out=60] (q);
\draw[LimeGreen, thick, postaction={decorate},>=stealth] (-) to[in=-120, out=-60] (p);
\draw[LimeGreen, thick, postaction={decorate},>=stealth] (q) to[in=-120, out=-60] (+);
\draw[LimeGreen, thick, postaction={decorate},>=stealth] (p) to[in=-120, out=-60] (q);
\end{scope}
\begin{scope}[shift={(3.5,0.5)}]
\draw[->] (-1,0) -- (0,0);
\end{scope}
\begin{scope}[shift={(4.75,0)},decoration={
    markings,
    mark=at position 0.75 with {\arrow{>}}}]
\node[circle, draw, fill, inner sep = 0pt,minimum width = 2pt] (a) at (0,0) {};
\draw[postaction={decorate},DarkOrchid, thick,>=stealth] (a) to[out = 80, in =180] (0.5,1) to[out=0, in =10] (a);
\draw[postaction={decorate},LimeGreen, thick,>=stealth] (a) to[out = 100, in =0] (-0.5,1) to[out=180, in =170] (a) ;
\end{scope}
\begin{scope}[shift={(-5.25,-1.5)},decoration={
    markings,
    mark=at position 0.75 with {\arrow{>}}}]
\node[draw=none,fill=none] (x) at (-1.5, 0) {(b)};
\node[circle, draw, fill, inner sep = 0pt,minimum width = 2pt] (-) at (0,0) {};
\node[circle, draw, fill, inner sep = 0pt,minimum width = 2pt] (+) at (2,0) {};
\node[circle, draw, fill, inner sep = 0pt,minimum width = 2pt] (a) at (-0.7, -0.35) {};
\node[circle, draw, fill, inner sep = 0pt,minimum width = 2pt] (b) at (2.7,0.35) {};

\draw[LimeGreen, thick, postaction={decorate},>=stealth] (-) to[out = 40, in =140] (+);
\node[LimeGreen, draw=none,fill=none] (x) at (0.5, -0.55) {\tiny$m$};
\draw[LimeGreen, thick, postaction={decorate},>=stealth] (+) to[in = -40, out =220] (-);
\node[LimeGreen, draw=none,fill=none] (x) at (1.6, 0.55) {\tiny$m+1$};

\draw[DarkOrchid, thick, postaction={decorate},>=stealth] (+) to[out = -10, in =80] (2.7,-0.35) to[out=260, in =280] (+);
\node[DarkOrchid, draw=none,fill=none] (y) at (-0.3, 0.6) {\tiny$n$};
\draw[DarkOrchid, thick, postaction={decorate},>=stealth] (-) to[out = 170, in =260] (-0.7,0.35) to[out=80, in =100] (-) ;
\node[DarkOrchid, draw=none,fill=none] (y) at (2.3, -0.6) {\tiny$n$};

\draw[LimeGreen, thick, postaction={decorate},>=stealth] (b) to[in = 10, out =280] (+);
\draw[DarkOrchid, thick, postaction={decorate},>=stealth] (b) to[out = 100, in =80] (+) ;
\draw[DarkOrchid, thick, postaction={decorate},>=stealth] (-) to[out = 190, in =100] (a);
\draw[LimeGreen, thick, postaction={decorate},>=stealth] (-) to[in=280, out =260] (a);
\end{scope}
\begin{scope}[shift={(-1,-1.5)}]
\draw[->] (-1,0) -- (0,0);
\end{scope}
\begin{scope}[shift={(-0.25,-1.5)},decoration={
    markings,
    mark=at position 0.55 with {\arrow{>}}}]
\node[circle, draw, fill, inner sep = 0pt,minimum width = 2pt] (-) at (0,0) {};
\node[circle, draw, fill, inner sep = 0pt,minimum width = 2pt] (p) at (0.5,0) {};
\node[circle, draw, fill, inner sep = 0pt,minimum width = 2pt] (q) at (1.5,0) {};
\node[circle, draw, fill, inner sep = 0pt,minimum width = 2pt] (+) at (2,0) {};

\draw[LimeGreen, thick, postaction={decorate},>=stealth] (-) to (p);
\draw[LimeGreen, thick, postaction={decorate},>=stealth] (p) to (q);
\draw[LimeGreen, thick, postaction={decorate},>=stealth] (q) to (+);
\node[LimeGreen, draw=none,fill=none] (x) at (1, 0.25) {\tiny$m-1$};

\draw[DarkOrchid, thick, postaction={decorate},>=stealth] (-) to[in=120, out=60] (p);
\draw[DarkOrchid, thick, postaction={decorate},>=stealth] (p) to[out=90, in=0] (0.25, 0.5) to[out=180, in=90] (-);
\node[DarkOrchid, draw=none,fill=none] (x) at (0.2, 0.7) {\tiny$n-1$};
\draw[DarkOrchid, thick, postaction={decorate},>=stealth] (q) to[in=120, out=60] (+);
\draw[DarkOrchid, thick, postaction={decorate},>=stealth] (+) to[out=90, in=0] (1.75, 0.5) to[out=180, in=90] (q);
\node[DarkOrchid, draw=none,fill=none] (x) at (1.8, 0.7) {\tiny$n-1$};

\node[LimeGreen, draw=none,fill=none] (x) at (1, -0.6) {\tiny$m$};
\draw[LimeGreen, thick, postaction={decorate},>=stealth] (+) to[in = -40, out =220] (-);

\end{scope}
\begin{scope}[shift={(3.5,-1.5)}]
\draw[->] (-1,0) -- (0,0);
\end{scope}
\begin{scope}[shift={(4.75,-2)},decoration={
    markings,
    mark=at position 0.75 with {\arrow{>}}}]
\node[circle, draw, fill, inner sep = 0pt,minimum width = 2pt] (a) at (0,0) {};
\draw[postaction={decorate},DarkOrchid, thick,>=stealth] (a) to[out = 80, in =180] (0.5,1) to[out=0, in =10] (a);
\draw[postaction={decorate},LimeGreen, thick, >=stealth] (a) to[out = 100, in =0] (-0.5,1) to[out=180, in =170] (a) ;
\end{scope}
\end{tikzpicture}
\caption{The maps $X_C\to\bar X_C \to X_A$ when (a): $M=2m+1,N=2n+1$ are both odd; and (b): $M=2m+1,N=2n$. }\label{fig:CtoA}
\end{figure}

\subsection{Splittings of spherical Artin groups}
All spherical Artin groups have non-trivial center and their cohomological dimension is equal to the number of standard generators \cite{Deligne72}, \cite{BrieskornSaito72}. We now give a characterization of graph of finite rank free groups with non-trivial center. This allows us to deduce that the only spherical Artin groups that split as graphs of finite rank free groups are the dihedral Artin groups (i.e.\ on two generators).

\begin{prop}\label{prop:cyclic center} Let $G = G(\Gamma)$ be a finite graph of free groups. 
Then the following conditions are equivalent.
\begin{enumerate}
\item The center of $G$ is non-trivial,
\item $G(\Gamma)$ satisfies one of the following:
\begin{enumerate}
\item all the vertex groups and edge groups are $\mathbb Z$ and for every $h\in G_v$ and $g\in \pi_1\Gamma$, if $g^{-1}hg\in G_v$, then $g^{-1}hg = h$, 
\item $\pi_1\Gamma\simeq \mathbb Z = \langle g \rangle$, all vertex and edge groups are isomorphic, and  there exists $n\in\mathbb Z-\{0\}$ such that for every $v\in V(\Gamma)$ and $h\in G_v$ we have $g^{-n}hg^n = h$.
\end{enumerate} 
\end{enumerate}
In particular, every finite graph of free groups with non-trivial center is virtually $F\times \mathbb Z$ where $F$ is a free group.
\end{prop}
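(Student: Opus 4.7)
The plan is to study the action of $G$ on its Bass-Serre tree $T$, which we take to be minimal. A non-trivial central element $z\in Z(G)$ is either elliptic (fixes a vertex of $T$) or hyperbolic (translates by a positive amount along an axis); I argue that the two cases yield conditions (a) and (b) respectively, and that each structural condition conversely produces a central element.

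Suppose first that $z$ is elliptic, fixing a vertex $\tilde v$. Since $z$ commutes with every $g\in G$, it also fixes $g\tilde v$ for all $g\in G$, and minimality of $T$ forces $z$ to fix all of $T$ pointwise. Thus $z\in G_v\cap Z(G_v)$ for every vertex $v$, and a free group has non-trivial center only when cyclic, so $G_v\cong\Z$. Edge groups embed in $\Z$; a trivial $G_e$ would yield a non-trivial free product factor of $G$, forcing $Z(G)=1$, so $G_e\cong\Z$ as well. For the rigidity condition, write $z=h_v^k$ and $h=h_v^j\in G_v$; centrality of $z^j$ gives $(g^{-1}hg)^k=h^k$, and if by assumption $g^{-1}hg\in G_v\cong\Z$, uniqueness of $k$-th roots in $\Z$ yields $g^{-1}hg=h$. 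Conversely, given (a), let $N$ be a common multiple of the indices $[G_v:G_e]$; then $h_v^N$ lies in every edge group at $v$ and extends via the edge identifications $\phi_{(e,\pm)}$ to a single element $z\in\bigcap_v G_v$. It commutes with each $G_v$ by abelianness, and for each stable letter $t\in\pi_1\Gamma$ the normal-form theory gives $t^{-1}zt\in G_v$, so the rigidity condition forces $t^{-1}zt=z$ and $z\in Z(G)$.

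Now suppose $z$ is hyperbolic. Its axis is $G$-invariant by centrality, so minimality of $T$ forces $T$ to be this axis, i.e.\ a line. A reflection of the line would conjugate $z$ to $z^{-1}$, contradicting centrality of an infinite-order element, so $G$ acts by translations and the translation-length map $\tau:G\to\Z$ is a well-defined homomorphism. For any $i\in T$, $\operatorname{Stab}(i)/\ker\tau$ embeds into $\Z$ while fixing $i$, hence is trivial, so $\operatorname{Stab}(i)=K:=\ker\tau$ for every vertex and edge. In particular each vertex and edge group is conjugate to the normal subgroup $K$, which makes them all equal to $K$ and pairwise isomorphic; the quotient $\Gamma=T/G$ is a cycle and $\pi_1\Gamma\cong\Z=\langle g\rangle$. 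Writing $z=g^n k$ with $k\in K$ and expanding $[z,h]=1$ for $h\in K=G_v$ yields $g^{-n}hg^n=khk^{-1}$; if $K$ is abelian this equals $h$ automatically, while if $K$ is non-abelian free then $Z(K)=1$, together with torsion-freeness of $G$, forces $k=1$ and again $g^{-n}hg^n=h$. This is condition (b). The converse is immediate since $g^n$ then commutes with every generator of $G$.

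For the final clause, in case (a) the quotient $G/\langle z\rangle$ is a finite graph of finite cyclic groups, hence virtually free by standard Bass-Serre theory; pulling back a free finite-index subgroup $F\leq G/\langle z\rangle$ gives a finite-index $G'\leq G$ fitting into a central extension $1\to\Z\to G'\to F\to 1$, which splits because $H^2(F;\Z)=0$ for free $F$, producing $G'\cong F\times\Z$. In case (b), $G\cong K\rtimes_\phi\Z$ with $\phi^n=\mathrm{id}$, so the index-$n$ subgroup $K\rtimes_\phi n\Z$ is already $K\times\Z$. The main obstacle I anticipate is the hyperbolic case's collapse of all vertex/edge stabilizers to the single normal subgroup $K$, together with the vanishing of the residual twist $k$ when $K$ is non-abelian; these rely delicately on minimality, the no-reflection argument from centrality, and the triviality of $Z(K)$ for $K$ free of rank $\geq 2$.
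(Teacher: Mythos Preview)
Your approach via the Bass--Serre tree is genuinely different from the paper's, which argues algebraically by looking at $Z(G)\cap G_v$ and asserting (without much justification) that if this intersection is trivial for every $v$ then the center sits inside $\pi_1\Gamma$. Your elliptic/hyperbolic dichotomy organizes the argument more transparently, and your proof in the elliptic case that each $G_v\cong\Z$ and that the rigidity condition holds (via unique $k$-th roots) is cleaner than what the paper writes.

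There are two gaps. The minor one is in your converse for~(a): taking $N$ a common multiple of the indices $[G_v:G_e]$ does not ensure $h_v^N$ lands in every vertex group. For example, on a path with three vertices and edge inclusions of indices $2,3,2,5$ one finds $a_1^{30}=a_2^{45}\notin\langle a_2^2\rangle$. The fix is the paper's: the $G_v$ are pairwise commensurable in $G$, so their finite intersection is a nontrivial cyclic group, and \emph{that} is what you should take as your central element.

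The substantive gap is the assertion that ``torsion-freeness of $G$ forces $k=1$'' in the hyperbolic case. Take $\Gamma$ a single loop with $G_v=G_e=F_2=\langle a,b\rangle$, $\phi_-=\mathrm{id}$, and $\phi_+$ conjugation by $a$. Then $G\cong F_2\times\Z$ with central element $z=ga^{-1}$, so $n=1$, $k=a^{-1}\neq 1$, and $g^{-1}bg=a^{-1}ba\neq b$. Your identity $g^{-n}hg^n=khk^{-1}$ only says that conjugation by $g^n$ is the \emph{inner} automorphism by $k$ on $K$, i.e.\ $g^nk^{-1}\in C_G(K)$; neither $Z(K)=1$ nor torsion-freeness pins down $k$. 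This example in fact shows that condition~(2b), read literally with $g$ the given stable letter, can fail even though $Z(G)\neq 1$; the paper's own proof has exactly the same lacuna (its claim that the center lies in $\pi_1\Gamma$ is false here). The correct conclusion is that the monodromy has finite order in $\mathrm{Out}(K)$, which suffices for the ``virtually $F\times\Z$'' statement you prove at the end, but not for~(2b) as written.
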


\begin{proof} 
We first show that $G(\Gamma)$ satisfying (2) has non-trivial center. 
First suppose that Condition (2a) holds.
Then all the inclusions $G_e\to G_v$ are inclusions of finite index subgroup. 
Since there are finitely many vertex and edge groups, 
their intersection $H$ is non-empty and has finite index in each vertex and edge group. 
The condition that for every $h\in G_v$ and $g\in \pi_1\Gamma$ such that $g^{-1}hg\in G_v$ we have $g^{-1}hg = h$, 
implies that $H$ is central in $G$, and $H\lhd G$. The quotient $G/H$ is a finite graph of finite groups, so it is virtually free. It follows that $G$ is virtually $F\times \mathbb Z$ for some free group $F$.
Now suppose that Condition (2b) holds. 
Then $G\simeq G_v\rtimes \pi_1\Gamma$ where $\pi_1\Gamma= \mathbb Z = \langle g \rangle$ and $g^n$ is central in $G$. The subgroup $G_v\times \langle g^n\rangle = F\times \mathbb Z$ has finite index in $G$.

We now prove the other direction. Consider a graph of groups $G(\Gamma)$ with the fundamental group $G$ whose center is non-trivial. 
We have $Z(G) \cap G_v \subset Z(G_v)$ for every vertex $v$, 
so either $G_v$ is infinite cyclic, or $G_v$ intersects the center trivially. 
The same is true for edge groups. 
If any $G_v$ intersects the center non-trivially, 
then all the vertex group and edge groups must intersect the center nontrivially, 
and in particular they are all infinite cyclic.  Then for $h\in Z(G)\cap G_v$, and $g\in\pi_1\Gamma$, we must have $g^{-1}hg = h$, so $G(\Gamma)$ satisfies Condition (2a). 

Now suppose that every vertex and edge group intersects the center trivially. 
The center must be contained in the center of $\pi_1 \Gamma$. 
Since $\pi_1\Gamma$ is a free group, it must be equal $\mathbb Z = \langle g\rangle$, 
and there must exist a power $g^n$ such that $g^{-n}hg^n = h$ for every $h\in G_v$ and every $v\in V(\Gamma)$. 
It follows that all vertex and edge groups are isomorphic and Condition (2b) is satisfied.

\end{proof}

\begin{cor}
The only irreducible spherical Artin group that split as non-trivial graphs of free groups,
 are the dihedral Artin groups.
\end{cor}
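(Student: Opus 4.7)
The plan is to combine Proposition~\ref{prop:cyclic center} with two classical facts about an irreducible spherical Artin group $G$ of rank $n$: namely, that it has infinite cyclic center \cite{Deligne72, BrieskornSaito72}, and that it is torsion-free with $\mathrm{cd}(G)=n$, as exhibited by Deligne's finite-dimensional $K(\pi,1)$ (the Salvetti complex).

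Suppose $G$ splits non-trivially as a graph of finite rank free groups. Because $Z(G)$ is non-trivial, Proposition~\ref{prop:cyclic center} yields that $G$ is virtually $F\times\mathbb{Z}$ for some finite rank free group $F$. The group $F\times\mathbb{Z}$ has cohomological dimension at most $2$, so $\mathrm{vcd}(G)\leq 2$. Since $G$ is torsion-free, $\mathrm{cd}(G)=\mathrm{vcd}(G)\leq 2$, so $n\leq 2$.

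The irreducible spherical Artin groups of rank $2$ are exactly the dihedral Artin groups, and the rank $1$ case $\text{Art}_{A_1}\simeq\mathbb{Z}$ admits no non-trivial graph-of-finite-rank-free-groups decomposition in the intended sense. Hence, the only irreducible spherical Artin groups that can split non-trivially as graphs of finite rank free groups are the dihedral ones. The main obstacle here is essentially just the invocation of Proposition~\ref{prop:cyclic center}: once one has the structural statement that non-trivial center forces a virtually $F\times\mathbb{Z}$ structure, the cohomological dimension bound and the low-rank classification finish the argument immediately.
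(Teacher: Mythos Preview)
Your negative direction is essentially identical to the paper's: both invoke Proposition~\ref{prop:cyclic center} to force a virtually $F\times\mathbb Z$ structure and then compare cohomological dimensions to exclude rank $\geq 3$. However, the corollary is an ``exactly'' statement, and you have only proved one implication. You establish that if an irreducible spherical Artin group splits non-trivially as a graph of free groups then it has rank $\leq 2$, but you never verify that the dihedral Artin groups actually \emph{do} admit such a splitting. The paper supplies this explicitly: for $M=2m$ even one rewrites $\text{Art}_M$ as $\langle a,x\mid ax^ma^{-1}=x^m\rangle \simeq \mathbb Z*_{\mathbb Z}$, and for $M=2m+1$ odd one rewrites it as $\langle x,y\mid x^M=y^2\rangle \simeq \mathbb Z*_{\mathbb Z}\mathbb Z$. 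Without this (or a citation to it), your proof is incomplete.
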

\begin{proof} Let $\text{Art}_M$ be a dihedral Artin group with the presentation $\langle a,b\mid (a,b)_M = (b,a)_M\rangle$. 
Let $M=2m$ be even, and let $x = ab$. 
Then $\text{Art}_M\simeq \langle a,x \mid ax^ma^{-1} = x^m\rangle$. 
In particular, $\text{Art}_M = \langle x\rangle*_{\langle x^m\rangle} = \mathbb Z*_{\mathbb Z}$.
Now if $M=2m+1$, let $x = ab$ and $y= (a,b)_M$. Then $\text{Art}_M \simeq \langle x,y \mid x^M = y^2\rangle$. 
In particular, $\text{Art}_M\simeq \langle x\rangle *_{\langle x^M\rangle}\langle y\rangle = \mathbb Z*_{\mathbb Z}\mathbb Z$. 
By \Cref{prop:cyclic center}, spherical Artin groups on more than two generators do not split as amalgamated products of HNN extension of nontrivial finite rank free groups, since their (virtual) cohomological dimension is greater than $2$, which is the (virtual) cohomological dimension of $F\times \mathbb Z$.
\end{proof}

\subsection{Splittings of three generator Artin groups with $\infty$ labels}
To complete the picture, we prove that the remaining three generator Artin groups, 
i.e.\ those with at least one $\infty$ label, also split as graphs of finite rank free groups. 
The Artin group $\text{Art}_{\infty\infty \infty}$ is the free group on three generators. 
The group $\text{Art}_{M\infty\infty} = \text{Art}_{M}*\mathbb Z$ can be described as 
$  \langle x, c\rangle*_{x^M=y^2}\langle y\rangle = F_2*_{\mathbb Z}\mathbb Z$ where $x = ab$ and $y = (a,b)_M$. 
Finally, for the Artin group $\text{Art}_{MN\infty}$ we can use the presentation (\ref{eq:2MN presentation}) skipping the relation $bx^{-1}yb^{-1}=yx^{-1}$, i.e.
\[ \text{Art}_{MN\infty} = \langle x,y,b \mid r_M(b, x), r_N(b,y)\rangle\]
We get that $\text{Art}_{MN\infty}$ splits as 
\begin{itemize}
\item $A*_{B, \beta}$ where $A = \langle x,y \rangle \simeq F_2$, $B= \langle x^m, y^n\rangle\simeq F_2$ $\beta = \text{id}_C$, when $M=2m$ and $N=2n$,
\item $A*_CB$ where $A = \langle x, y\rangle\simeq F_2$, $B\simeq F_2$ and $C\simeq F_3$, when at least one of $M,N$ is odd. The splitting is obtained in the same way as in the case of $\text{Art}_{2MN}$.
\end{itemize}

\section{Residual finiteness}\label{sec:rf}
In the section we prove \Cref{thm:rf}. We do so separately in the case where $M,N$ are both even (\Cref{thm:rf all even}), and where exactly one of $M,N$ is odd (\Cref{thm:rf one odd}). We start with recalling a criterion for residual finiteness of amalgamated products and HNN extensions of finite rank free groups, proven in \cite{JankiewiczArtinRf}, which relies on Wise's result on residual finiteness of \emph{algebraically clean} graphs of free groups \cite{WisePolygons}. In the second subsection we compute the intersections of conjugates of the amalgamating subgroup in the factor groups. Finally, we give proofs of the main theorems.

We start with a motivating example.
\begin{exa}[Group $\text{Art}_{244}$]\label{exa:244rf}
By \Cref{exa:244} the group $\text{Art}_{244}$ fits in  the following short exact sequence of groups 
$$1\to C\to \text{Art}_{244} \to \mathbb Z/2\mathbb Z*\mathbb Z\to 1.$$
In particular, $\text{Art}_{244}$ is a (finite rank free group)-by-(virtually free group), and so it is virtually (finite rank free group)-by-(free group). The residual finiteness of $\text{Art}_{244}$ follows from the fact that every split extension of a finitely generated residually finite group by residually finite group is residually finite \cite{Malcev83}.
\end{exa}

\subsection{Criteria for residual finiteness}
Recall that a subgroup $C$ is \emph{malnormal} in a group $A$, if $C\cap g^{-1}Cg = \{1\}$ for every $g\in A-C$. 
Similarly, $C$ is \emph{almost malnormal} in $A$, if $|C\cap g^{-1}Cg|<\infty$ for every $g\in A-C$. 
More generally, a collection of subgroup $\{C_i\}_{i\in I}$ is an \emph{almost malnormal} collection, 
if $|C_i\cap gC_jg^{-1}|<\infty$ whenever $g\notin C_i$ or $i\neq j$.

We now assume that the inclusion of free groups $C\to A$ is induced by a map $f:X_C\to X_A$ of graphs, and the automorphism $\beta:C\to C$ is induced by some graph automorphism $X_C\to X_C$.
The following theorem was proven in \cite{JankiewiczArtinRf}. 

\begin{thm}[{\cite[Thm 2.9]{JankiewiczArtinRf}}]\label{thm:conditions for rf am prod}
Let $\hat f:\hat X_C\to \hat X_A$ be a map of $2$-complexes that restricted to the $1$-skeletons is equal $f$, and let $\pi:A \to \hat A$ be the natural quotient induced by the inclusion $X_A\hookrightarrow \hat X_A$ of the $1$-skeleton. Suppose that the following conditions hold.
\begin{enumerate}
\item $\hat A$ is a locally quasiconvex, virtually special hyperbolic group.
\item $\pi(C) = \pi_1\hat X_C$ and that the lift of $\hat f$ to the universal covers is an embedding.
\item $\pi(C)$ is almost malnormal in $\hat A$.
\item $\beta$ projects to an isomorphism $\pi(C)\to \pi(C)$.
\end{enumerate}
Then $D(A,C,\beta)$ is residually finite.
\end{thm}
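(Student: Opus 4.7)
\medskip\noindent\textbf{Proof plan.} The plan is to push the entire setup forward through $\pi$ to a hyperbolic virtually special double $\hat G := D(\hat A,\pi(C),\hat\beta)$, where standard combination theorems apply, and then lift residual finiteness back to $G := D(A,C,\beta)$ using Wise's theorem on algebraically clean graphs of finite rank free groups \cite{WisePolygons}. Conditions (2) and (4) are exactly what is needed to make the quotient double well-defined: the embedding on universal covers in (2) forces $\pi(C)=\pi_1\hat X_C$ to inject into $\hat A$, and (4) says $\hat\beta$ descends to a well-defined automorphism of $\pi(C)$. Consequently $\pi$ extends to a surjection $\Pi:G\twoheadrightarrow \hat G$.

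First I would prove $\hat G$ is hyperbolic and virtually special. By (1), $\hat A$ is locally quasiconvex hyperbolic, so the finitely generated subgroup $\pi(C)$ is quasiconvex; combined with almost malnormality (3), the Bestvina--Feighn combination theorem \cite{BestvinaFeighn92} gives hyperbolicity of $\hat G$, and the cubulation of malnormal amalgams of virtually special groups \cite{HsuWiseCubulatingMalnormal} upgrades this to virtual specialness. In particular $\hat G$ is residually finite; pick a torsion-free special subgroup $\hat G'\leq \hat G$ of finite index and let $G' := \Pi^{-1}(\hat G')$, a finite index subgroup of $G$.

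Next I would analyze $G'$ via Bass--Serre theory. Its action on the Bass--Serre tree of $G$ induces a graph of groups decomposition of $G'$ with finite rank free vertex and edge groups (namely the intersections of $G'$ with conjugates of $A$ and $C$). The core claim is that this decomposition is \emph{algebraically clean}, i.e., each edge group is malnormal in the adjacent vertex group. Granting this, Wise's theorem \cite{WisePolygons} gives $G'$ virtually special, hence $G$ is residually finite. The main obstacle is establishing algebraic cleanness. In the corresponding decomposition of $\hat G'$ it is automatic: almost malnormality of $\pi(C)$ in $\hat A$ becomes genuine malnormality after intersecting with the torsion-free $\hat G'$. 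Lifting this malnormality through $\Pi$ to $G'$ is where the universal cover embedding in (2) does real work---it forces $\pi^{-1}(\pi(C))\cap A = C$ inside $A$, so that any $h\in G'\cap A^g$ whose image lies in $\pi(C)^{\Pi(g)}$ must already lie in $C^g$, which is precisely what is needed to rule out any vertex group element conjugating an edge group into itself without already belonging to that edge group.
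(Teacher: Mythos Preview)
The paper does not prove this theorem; it is quoted from \cite{JankiewiczArtinRf} (combining Thm~2.9 and Lem~2.6 there), with only the remark that replacing ``malnormal'' by ``almost malnormal'' costs nothing since the Bestvina--Feighn and Hsu--Wise combination theorems require only almost malnormality. Your outline matches the strategy the paper points to: push to the quotient double $\hat G$, apply the combination theorems to get $\hat G$ hyperbolic and virtually special, pull back a torsion-free finite-index subgroup $\hat G'$ to $G'\leq G$, and invoke Wise's theorem on the induced graph of free groups for $G'$.

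Two points need correcting. First, \emph{algebraically clean} in Wise's sense means each edge group is a \emph{free factor} of each adjacent vertex group, not merely malnormal; the result from \cite{WisePolygons} does not apply under malnormality alone. Second, condition~(2) does not give the identity $\pi^{-1}(\pi(C))\cap A=C$ you assert: one always has $\pi^{-1}(\pi(C))=C\cdot\ker\pi$, and $\ker\pi$ (a nontrivial normal subgroup of $A$) is never contained in the non-normal finitely generated subgroup $C$. The genuine role of the universal-cover embedding in (2) is topological rather than group-theoretic: it guarantees that in the finite cover of the total complex corresponding to $G'$, the lifted edge graphs \emph{embed} in the lifted vertex graphs, so the edge groups are carried by embedded subgraphs of the vertex graphs and are therefore free factors. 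That is what makes the decomposition of $G'$ algebraically clean and lets Wise's theorem apply. Your malnormality-lifting argument, as written, neither yields free factors nor correctly interprets condition~(2).
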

The theorem above is a combination of Thm 2.9 and Lem 2.6 in \cite{JankiewiczArtinRf}. 
Condition (2) in the statement of \cite[Thm 2.9]{JankiewiczArtinRf} is that $\pi(C)$ is \emph{malnormal} in $\bar A$. However, the proof is identical when we replace it with \emph{almost malnormal}. Indeed, the Bestvina-Feighn combination theorem \cite{BestvinaFeighn92}, as well as the Hsu-Wise combination theorem \cite{HsuWiseCubulatingMalnormal} only require almost malnormality.

We now state a version for HNN extension. 
Similarly as above, combining Thm 2.12 and Lem 2.6 from \cite{JankiewiczArtinRf}, we obtain the following.

\begin{thm}[{\cite[Thm 2.12]{JankiewiczArtinRf}}]\label{thm:conditions for rf hnn}
Let $\hat f_{-}, \hat f_{+}:\hat X_B\to \hat X_A$ be two maps of $2$-complexes that restricted to the $1$-skeletons are equal to $f_{-}, f_{+}$ respectively, and let $\pi:A \to \hat A$ be the natural quotient induced by the inclusion $X_A\hookrightarrow \hat X_A$. Suppose that the following conditions hold.
\begin{enumerate}
\item $\hat A$ is a locally quasiconvex, virtually special hyperbolic group.
\item $\pi(B) = \hat f_{-*}( \pi_1\hat X_B)$, and $\pi\left(\beta(B)\right) = \hat f_{+*}( \pi_1\hat X_B)$ and the lifts of $\hat f_{-}, \hat f_{+}$ to the universal covers are both embeddings.
\item The collection $\{\pi(B), \pi\left(\beta(B)\right)\}$ is almost malnormal in $\hat A$.
\item $\beta:B\to \beta(B)$ projects to an isomorphism $\pi(B)\to \pi\left(\beta(B)\right)$.
\end{enumerate}
Then $A*_{B,\beta}$ is residually finite.
\end{thm}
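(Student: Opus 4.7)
The plan is to mirror the amalgamated-product argument behind Theorem~\ref{thm:conditions for rf am prod}, pushing it through the HNN setting. As in that case, the proof proceeds in two stages: first, produce a residually finite hyperbolic virtually special quotient $\hat G$ of $A*_{B,\beta}$ built from $(\hat A, \pi, \beta)$; second, apply the lifting lemma \cite[Lem 2.6]{JankiewiczArtinRf} to transfer residual finiteness back to $A*_{B,\beta}$.

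For the first stage, I would set $\hat G := \hat A *_{\hat B, \hat\beta}$, where $\hat B := \pi(B)$ and $\hat\beta : \hat B \to \pi(\beta(B))$ is the well-defined isomorphism guaranteed by Condition~(4). The universal property of HNN extensions produces a surjection $\Pi : A*_{B,\beta} \to \hat G$ extending $\pi$ on the vertex group and sending the stable letter to itself. Condition~(2) says the lifts of $\hat f_{\pm}$ to universal covers are embeddings, which together with Condition~(1) forces $\hat B$ and $\pi(\beta(B))$ to sit as quasiconvex subgroups of the hyperbolic group $\hat A$. Condition~(3) (almost malnormality of the pair) then lets me apply the Bestvina-Feighn combination theorem \cite{BestvinaFeighn92} to conclude that $\hat G$ is hyperbolic, and the Hsu-Wise combination theorem \cite{HsuWiseCubulatingMalnormal} to conclude that $\hat G$ is virtually special; in particular $\hat G$ is linear and therefore residually finite.

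For the second stage, I would apply \cite[Lem 2.6]{JankiewiczArtinRf} verbatim in the HNN setting. That lemma takes a graph of finitely generated residually finite groups, together with a compatible surjection onto a residually finite quotient whose data pulls back from the vertex quotient, and outputs residual finiteness of the original graph of groups. With $\Pi:A*_{B,\beta}\to\hat G$ as constructed, Conditions~(1)--(4) supply precisely the input required, and the conclusion is exactly the desired residual finiteness of $A*_{B,\beta}$.

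The main obstacle, and the only genuine departure from the statement of \cite[Thm 2.12]{JankiewiczArtinRf}, is the replacement of malnormality by almost malnormality in Condition~(3). The hardest step is checking that this weakening does not break any input to the two combination theorems or to Lemma~2.6. As indicated in the discussion after Theorem~\ref{thm:conditions for rf am prod}, both the Bestvina-Feighn and Hsu-Wise combination theorems already admit almost malnormal collections as hypothesis, and the proof of \cite[Lem 2.6]{JankiewiczArtinRf} uses no property of the edge subgroups beyond finite generation and residual finiteness. Hence the original argument carries through unchanged in the almost malnormal setting, as claimed.
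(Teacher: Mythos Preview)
Your proposal is correct and follows essentially the same approach the paper indicates: the paper does not give an independent proof but explains that the statement is obtained by combining \cite[Thm 2.12]{JankiewiczArtinRf} with \cite[Lem 2.6]{JankiewiczArtinRf}, and notes (in the discussion after Theorem~\ref{thm:conditions for rf am prod}) that the Bestvina--Feighn and Hsu--Wise combination theorems only require almost malnormality. Your two-stage outline---forming the quotient HNN extension $\hat A*_{\hat B,\hat\beta}$, showing it is hyperbolic and virtually special via those combination theorems, then lifting residual finiteness via Lemma~2.6---is precisely this combination.
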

\subsection{Intersection of the conjugates of the amalgamating subgroup}

Let $G = \text{Art}_{2MN}$ where $M,N\geq 4$ and at least one of them is odd. Let $A, B, C$ be free groups of ranks $2,3,5$ respectively, provided by \Cref{cor:at least one odd splitting}. In this section we describe intersections $C\cap g^{-1}Cg$ where $g\in A$.

\begin{prop}\label{prop:fiber product one odd}
 Let $M= 2m+1$ be odd, $N=2n$ even, and let $A,B,C$ be as in \Cref{cor:at least one odd splitting}. Let $g\in A-C$. Then the intersection $C\cap g^{-1}Cg$ is one of: $\langle x^{2m+1}, y^n, x^{-1}y \rangle$, $\langle x^{2m+1}, y^n, yx^{-1} \rangle$, $\langle x^{2m+1}, y^n\rangle$,  $\langle x^{2m+1}\rangle$,  $\langle y^n\rangle$. 
\end{prop}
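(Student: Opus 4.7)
The plan is to apply \Cref{thm:fiber product}: every non-trivial intersection $C \cap g^{-1}Cg$ appears as the fundamental group of a connected component of the fiber product $\bar X_C \otimes_{X_A} \bar X_C$, where $\bar X_C \to X_A$ is the combinatorial immersion representing $C \hookrightarrow A$ depicted in \Cref{fig:CtoA}(b). The diagonal component reproduces $C$ itself, so the content of the proposition is to classify the non-diagonal components for $g \in A - C$ and verify that each has fundamental group equal to one of the five listed subgroups.

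First I would set up notation following \Cref{fig:CtoA}(b): $\bar X_C$ has four branch vertices $b_-, p, q, b_+$ together with intermediate subdivision vertices, its $x$-edges form a single oriented cycle $b_- \xrightarrow{x} p \xrightarrow{x^{m-1}} q \xrightarrow{x} b_+ \xrightarrow{x^m} b_-$ of length $M = 2m+1$, and its $y$-edges form two disjoint oriented cycles $b_- \xrightarrow{y} p \xrightarrow{y^{n-1}} b_-$ and $q \xrightarrow{y} b_+ \xrightarrow{y^{n-1}} q$, each of length $n$. I would then compute the fiber product via the coordinate-wise rule: an $s$-labelled edge leaves $(v_1,v_2)$ iff $s$-edges leave both $v_i$ in $\bar X_C$, and the terminus is taken coordinate-wise.

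To read off the components, I would decouple the two colours. The $x$-only fiber product of the $x$-cycle against itself decomposes into $M$ disjoint cycles, one for each rotational offset, each contributing a loop labelled $x^M = x^{2m+1}$; similarly each $y$-cycle against itself produces $n$ cycles, each a loop labelled $y^n$. The richer components arise at the branch vertex pairs, where the two colours are allowed to combine. A case analysis on the basepoint pair $(b_-, v)$ then yields: two rank-three components, in which both the $x$-cycle and a $y$-cycle close up through the short purple edges $b_- \xrightarrow{y} p$ or $q \xrightarrow{y} b_+$, and the one-step $x$-alignment between their endpoints produces the relator $x^{-1}y$ or $yx^{-1}$ depending on the direction of the offset; one rank-two component in which both colours match but no short purple edge closes, giving only $x^{2m+1}$ and $y^n$; and two rank-one components in which matching occurs in only one colour, giving $\langle x^{2m+1}\rangle$ or $\langle y^n\rangle$.

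The main obstacle is the combinatorial bookkeeping across the sixteen branch-vertex pairs (and the many subdivision pairs); the completeness of the list rests on carefully checking which rotational offsets of the $x$-cycle align the branch vertices so that a short purple edge can close up. A final Euler-characteristic check $\rank = 1 - \chi$ on each component, matching the ranks $(3, 3, 2, 1, 1)$ of the five listed subgroups, confirms that no other possibilities arise.
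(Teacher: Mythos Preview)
Your approach is exactly the paper's: both arguments invoke \Cref{thm:fiber product}, compute the fiber product $\bar X_C\otimes_{X_A}\bar X_C$ for the immersion in \Cref{fig:CtoA}(b), and classify the non-diagonal components by how the four valence-$4$ vertices align. The paper organises the cases by the index difference $|i-j|$ of the branch vertices $x_1,\dots,x_4$, whereas you frame it as ``decoupling colours'' and tracking rotational offsets of the $x$- and $y$-cycles; these are the same computation in different clothing.

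One caution about the bookkeeping you flag as ``the main obstacle'' (and which the paper's terse proof also elides): at $x$-offset $m$ the branch--branch pairs on the fibred $x$-cycle are not only $(b_-,q)$ and $(p,b_+)$ but also $(b_+,b_-)$, since $b_+$ sits at $x$-position $m{+}1$ and $(m{+}1)+m\equiv 0$ modulo $2m{+}1$. That third pair carries its own $y^n$-cycle attached at a single point, so a literal Euler-characteristic count gives rank $4$ for this component, not $3$; analogous extra attachments appear at offsets $1$ and $m{-}1$. This does not disturb the application in \Cref{thm:rf one odd}, because every extra generator so produced is a conjugate of $x^{2m+1}$ or $y^n$ and hence dies under $\pi:A\to\hat A$, leaving the image $\mathbb Z/p$ as claimed. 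But if you carry out your proposed ``final Euler-characteristic check'' honestly, be prepared for the ranks not to line up with $(3,3,2,1,1)$ on the nose.
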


\begin{figure}
\begin{tikzpicture}[decoration={
   markings,
    mark=at position 0.55 with {\arrow{>}}}]
\begin{scope}
\node[draw=none,fill=none] (x) at (-1, 0) {(a)};
\node[circle, draw, fill, inner sep = 0pt,minimum width = 2pt] (-) at (0,0) {};
\node[circle, draw, fill, inner sep = 0pt,minimum width = 2pt] (+) at (2,0) {};

\draw[DarkOrchid, thick, postaction={decorate},>=stealth] (+) to[in = 90, out =90] (-);
\draw[DarkOrchid, thick, postaction={decorate},>=stealth] (-) to[in = 150, out =30] (+);
\node[DarkOrchid, draw=none,fill=none] (x) at (0.9, 0.8) {\tiny$n-1$};
\draw[LimeGreen, thick, postaction={decorate},>=stealth] (+) to[in = -90, out =-90] (-);
\draw[LimeGreen, thick, postaction={decorate},>=stealth] (-) to[in = -150, out =-30] (+);
\node[LimeGreen, draw=none,fill=none] (x) at (0.9, -0.8) {\tiny$2m+1$};
\end{scope}
\begin{scope}[shift={(5,0)}]
\node[draw=none,fill=none] (x) at (-1, 0) {(b)};
\node[circle, draw, fill, inner sep = 0pt,minimum width = 2pt] (-) at (0,0) {};
\node[circle, draw, fill, inner sep = 0pt,minimum width = 2pt] (p) at (0.5,0) {};
\node[circle, draw, fill, inner sep = 0pt,minimum width = 2pt] (+) at (2,0) {};

\draw[DarkOrchid, thick, postaction={decorate},>=stealth] (+) to[in = 90, out =90] (-);
\node[DarkOrchid, draw=none,fill=none] (x) at (0.9, 0.8) {\tiny$n$};
\draw[LimeGreen, thick, postaction={decorate},>=stealth] (+) to[in = -90, out =-90] (-);
\node[LimeGreen, draw=none,fill=none] (x) at (0.9, -0.8) {\tiny$m$};

\node[DarkOrchid, draw=none,fill=none] (x) at (1.3, 0.2) {\tiny$n$};
\node[LimeGreen, draw=none,fill=none] (x) at (1.3, -0.2) {\tiny$m$};

\draw[DarkOrchid, thick, postaction={decorate},>=stealth] (-) to[in=120, out=60] (p);
\draw[DarkOrchid, thick, postaction={decorate},>=stealth] (p) to[in=120, out=60] (+);
\draw[LimeGreen, thick, postaction={decorate},>=stealth] (-) to[in=-120, out=-60] (p);
\draw[LimeGreen, thick, postaction={decorate},>=stealth] (p) to[in=-120, out=-60] (+);
\end{scope}
\end{tikzpicture}
\caption{A connected component of $\bar X_C\otimes_{X_A} \bar X_C$ when (a): $M=2m+1,N=2n$ , and (b): $M=2m+1,N=2n+1$ (right).}\label{fig:fiber product}
\end{figure}
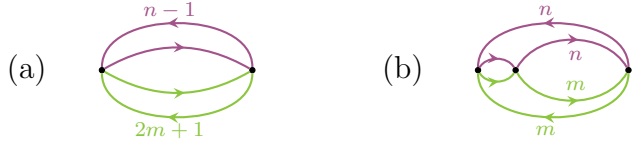

\begin{proof}
Let $X_A, \bar X_C$ be as in \Cref{fig:CtoA}(b). 
By \Cref{thm:fiber product} the conjugates $C\cap g^{-1}Cg$ can be represented by the connected components of the fiber product $\bar X_C\otimes_{X_A} \bar X_C$.
Let $x_1, x_2, x_3, x_4$ be the four vertices of valence $4$ in $\bar X_C$ such that $x_1, x_2$ belong to the same $y$-cycle and $x_3, x_4$ belong to the same $y$-cycle, and so that the ordering if the indices is consistent with the order of the vertices on the $x$-cycle. 
Then the connected component of $\bar X_C\otimes_{X_A} \bar X_C$ containining one of $(x_i, x_j)$ is
\begin{itemize}
\item the graph in \Cref{fig:fiber product}(a), if $|i-j| = 2$, in which case $C\cap g^{-1}Cg$ is $\langle x^{2m+1}, y^n, x^{-1}y \rangle$ or $\langle x^{2m+1}, y^n, yx^{-1} \rangle$, or
\item a bouquet of two circles, labelled by $x^{2m+1}$ and $y^n$ otherwise, in which case $C\cap g^{-1}Cg$ is $\langle x^{2m+1}, y^n\rangle$.
\end{itemize}
Every other non-diagonal connected component of $\bar X_C\otimes_{X_A} \bar X_C$ is either trivial or a single circle, which is labelled by either $x^{2m+1}$ or $y^n$, in which case $C\cap g^{-1}Cg$ is $\langle x^{2m+1}\rangle$ or  $\langle y^n\rangle$ respectively.
\begin{com}
conjugacy class versus actual group
\end{com}
\end{proof}

We finish with the following observation regarding the case where $M,N$ are both odd.
\begin{remark}\label{rem:both odd} Let $M= 2m+1, N=2n+1$ be both odd, and let $A,B,C$ be as in \Cref{cor:at least one odd splitting}. 
Let $g\in A-C$. Then the intersection $C\cap g^{-1}Cg$ is one of: $\langle x^{2m+1}, y^n, x^{-1}y \rangle$, $\langle x^{2m+1}, y^n\rangle$,  $\langle x^{2m+1}\rangle$,  $\langle y^n\rangle$. Let $X_A, \bar X_C$ be as in \Cref{fig:CtoA}. 
Let $x_1, x_2, x_3, x_4$ be the four vertices of valence $4$ in $\bar X_C$ ordered consistently with the orientation of the $x$-cycle such that $x_1$ and $x_4$ are at distance $m$ in the $x$-cycle.
Then the connected component of $\bar X_C\otimes_{X_A} \bar X_C$ containining vertices $(x_1, x_3), (x_2, x_4), (x_4, x_1)$ (or vertices $(x_3, x_1), (x_4, x_2), (x_1, x_4)$) looks like the right graph in \Cref{fig:fiber product}.
\end{remark}

\subsection{Proof of residual finiteness}
We now use  \Cref{thm:conditions for rf hnn} to prove that $\text{Art}_{2MN}$ where $M,N$ are even and equal at least $4$ is residually finite.
\begin{thm}\label{thm:rf all even}
Let $M = 2m$ and $N=2n$ be both even and $\geq 4$. Then $\text{Art}_{2MN}$ is residually finite.
\end{thm}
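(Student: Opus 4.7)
The plan is to apply the HNN-extension residual finiteness criterion \Cref{thm:conditions for rf hnn} to the splitting $\text{Art}_{2MN} = A *_{B,\beta}$ given by \Cref{prop:all even splitting}, where $A = \langle x, y \rangle \simeq F_2$, $B = \langle x^m, y^n, x^{-1}y \rangle \simeq F_3$, and $\beta$ fixes $x^m, y^n$ and sends $x^{-1}y \mapsto yx^{-1}$. The strategy is to pass to a quotient $\hat A$ of $A$ that kills all three generators of $B$, so that $\pi(B)$ and $\pi(\beta(B))$ become \emph{finite} cyclic subgroups of $\hat A$ and almost malnormality becomes automatic.

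Concretely, assuming first that $(m,n) \neq (2,2)$, I would choose an integer $k \geq 2$ with $\tfrac{1}{m} + \tfrac{1}{n} + \tfrac{1}{k} < 1$ and take $\hat A = \langle x, y \mid x^m, y^n, (x^{-1}y)^k \rangle$, the hyperbolic von Dyck triangle group $\Delta(m,n,k)$. This group is Gromov-hyperbolic, locally quasiconvex, and virtually special, being commensurable with a closed hyperbolic surface group. I would build $\hat X_B$ by attaching three $2$-cells to $X_B$ along loops representing $x^m$, $y^n$, and $(x^{-1}y)^k$, so that $\pi_1 \hat X_B \simeq \mathbb{Z}/k$ is generated by the class of $x^{-1}y$. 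The extension $\hat f_-$ of $f_-$ sends the new $2$-cells to the corresponding $2$-cells of the presentation complex $\hat X_A$; to define $\hat f_+$, I would first attach an auxiliary $2$-cell to $\hat X_A$ along the null-homotopic loop $(yx^{-1})^k = x(x^{-1}y)^k x^{-1}$ (which leaves $\hat A$ unchanged), and then extend $f_+$ by sending the three $2$-cells of $\hat X_B$ to the corresponding target cells.

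With this setup three of the four hypotheses of \Cref{thm:conditions for rf hnn} are essentially formal: (1) holds since $\hat A$ is a hyperbolic triangle group; (3) holds because $\pi(B)$ and $\pi(\beta(B))$ are finite cyclic of order $k$, and any collection of finite subgroups of any group is an almost malnormal collection; and (4) holds because $\beta$ carries the generator $x^{-1}y$ of $\pi(B)$ to the generator $yx^{-1}$ of $\pi(\beta(B))$, inducing an isomorphism. The remaining case $M = N = 4$, where the identity $\tfrac{1}{2}+\tfrac{1}{2}=1$ rules out any hyperbolic triangle quotient, would be handled separately along the lines of \Cref{exa:244rf}: since $B$ has index $2$ in $A$, the splitting yields a short exact sequence $1 \to K \to \text{Art}_{244} \to \mathbb{Z}/2 * \mathbb{Z} \to 1$ with $K$ a finitely generated free group, and Malcev's theorem on split extensions applies.

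The main obstacle will be verifying condition~(2) of \Cref{thm:conditions for rf hnn}: that the lifts of $\hat f_\pm$ to universal covers are embeddings of $2$-complexes. This amounts to confirming that each attached $2$-cell of $\hat X_B$ maps injectively onto the corresponding $2$-cell of $\hat X_A$ in the universal cover, which should reduce to verifying that the folded graphs $\bar X_B^\pm$ from \Cref{fig:both even maps} remain immersed into $\hat X_A$ after attaching the polygonal disks, together with a local injectivity check at each vertex of $\bar X_B^\pm$.
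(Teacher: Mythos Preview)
Your proposal is correct and essentially identical to the paper's argument: separate out $M=N=4$ via \Cref{exa:244rf}, and otherwise apply \Cref{thm:conditions for rf hnn} with the hyperbolic von Dyck quotient $\hat A=\langle x,y\mid x^m,y^n,(x^{-1}y)^p\rangle$, so that $\pi(B)$ and $\pi(\beta(B))$ become finite cyclic (hence automatically an almost malnormal collection) and conditions (1), (3), (4) are immediate. The only difference is emphasis: the paper declares condition~(2) immediate from Figure~\ref{fig:attaching 2-cells} (and no auxiliary $2$-cell along $(yx^{-1})^p$ is needed, since the existing $(x^{-1}y)^p$-cell of $\hat X_A$ already receives the middle disk after a cyclic shift of its boundary word), whereas you flag it as the main remaining obstacle.
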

\begin{proof}
The case where $M=N=4$ is proven in \Cref{exa:244rf}, so we assume that at least one of $M,N$, say $M$, is at least $6$.
By \Cref{prop:all even splitting}, $\text{Art}_{2MN}$ splits as an HNN-extension $A*_{B,\beta}$ where $A=\langle x,y\rangle$ and $B=\langle x^m, y^n, x^{-1}y\rangle $, 
and $\beta:B\to A$ is given by $x^m\mapsto x^m$, $y^n\mapsto y^n$ and $x^{-1}y \mapsto yx^{-1}$.
We deduce residual finiteness of $\text{Art}_{2MN}$ from \Cref{thm:conditions for rf hnn}. We now check that all its assumptions are satsified.

Let $\hat A =\langle x,y \mid, x^m, y^n, (x^{-1}y)^p\rangle$ where $p\geq 7$, and let $\hat X_A$ be the presentation complex of $\hat A$. Let $\pi:A\to \hat A$ be the natural quotient. 
Since $m\geq 3$, the group $\hat A$ is a hyperbolic von Dyck triangle group, and in particular Condition (1) of \Cref{thm:conditions for rf hnn} is satisfied. 

The image $\pi(B)$ is a finite cyclic group $\mathbb Z/p$ of order $p$ generated by $x^{-1}y$, 
and the image $\pi\left(\beta(B)\right)$ is a copy of $\mathbb Z/p$ generated by $yx^{-1}$.
Since $\pi(B), \pi\left(\beta(B)\right)$ are finite groups, they form an almost malnormal collection in $\hat A$, 
so Condition (3) in \Cref{thm:conditions for rf hnn} is satisfied. 
Let $\hat X_B$ be obtained from $X_B$ by attaching a $2$-cell to each of the left and the right loop of $X_B$ (left and right in Figure~\ref{fig:both even maps}) via a $1$-to-$1$ map (corresponding to the relations $x^m, y^n$), and one $2$-cell to the middle loop via a $p$-to-$1$ map (corresponding to the relation $(x^{-1}y)^p$). 
It is immediate that Conditions (2) and (4) of \Cref{thm:conditions for rf hnn} holds. 
See Figure~\ref{fig:attaching 2-cells}. The proof is complete.

\begin{figure}
\begin{tikzpicture}
\begin{scope}[shift={(0,0)},decoration={
    markings,
    mark=at position 0.75 with {\arrow{>}}}]
\path[fill=RoyalBlue!30] (0,0) to[out = 120, in =180] (0,1.2) to[in=60, out=0] cycle;
\node[circle, draw, fill=white, inner sep = 0.4pt,minimum width = 2pt] (a) at (0,0.9) {};
\node[circle, draw, fill, inner sep = 0pt,minimum width = 2pt] (a) at (0,0) {};
\draw[RoyalBlue, thick] (a) to[out = 120, in =180] (0,1.2) to[out=0, in =60] (a);
\draw[RoyalBlue, thick, fill=RoyalBlue!30] (a) to[out = 50, in =160] (0.8,1) to[out=-20, in =0] (a);
\draw[RoyalBlue, thick,  fill=RoyalBlue!30] (a) to[out = 130, in =20] (-0.8,1) to[out=200, in =180] (a);
\end{scope}
\begin{scope}[shift={(1.25,1)}]
\draw[->] (0,0) -- (0.8,0.5);
\end{scope}
\begin{scope}[shift={(3.5,1)},decoration={
    markings,
    mark=at position 0.65 with {\arrow{>}}}]
\path[fill=gray!30] (0,0) to[out = 120, in =180] (0,1.2) to[in=60, out=0] cycle;
\node[circle, draw, fill=white, inner sep = 0.4pt,minimum width = 2pt] (a) at (0,0.9) {};
\node[circle, draw, fill, inner sep = 0pt,minimum width = 2pt] (a) at (0,0) {};
\node[circle, draw, fill, inner sep = 0pt,minimum width = 1pt] (b) at (0,1.2) {};
\draw[postaction={decorate},>=stealth, DarkOrchid, thick] (b) to[out = 180, in =120] (a);
\draw[postaction={decorate},>=stealth, LimeGreen, thick] (b) to[out=0, in=60] (a);
\draw[postaction={decorate},>=stealth,DarkOrchid, thick,  fill=DarkOrchid!30] (a) to[out = 50, in =160] (0.8,1) to[out=-20, in =0] (a);
\draw[postaction={decorate},>=stealth,LimeGreen, thick,fill=LimeGreen!30] (a) to[out = 130, in =20] (-0.8,1) to[out=200, in =180] (a);\node[LimeGreen, draw=none,fill=none] (x) at (-1, 0.3) {\tiny$m$};
\node[DarkOrchid, draw=none,fill=none] (x) at (1, 0.3) {\tiny$n$};
\end{scope}
\begin{scope}[shift={(1.25,0)}]
\draw[->] (0,0) -- (0.8,-0.5);
\end{scope}
\begin{scope}[shift={(3.5,-1)},decoration={
    markings,
    mark=at position 0.75 with {\arrow{>}}}]
\path[fill=gray!30] (0,0) to[out = 120, in =180] (0,1.2) to[in=60, out=0] cycle;
\node[circle, draw, fill=white, inner sep = 0.4pt,minimum width = 2pt] (a) at (0,0.9) {};
\node[circle, draw, fill, inner sep = 0pt,minimum width = 2pt] (a) at (0,0) {};
\node[circle, draw, fill, inner sep = 0pt,minimum width = 1pt] (b) at (0,1.2) {};
\draw[postaction={decorate},>=stealth,LimeGreen, thick, name path =z] (a) to[out = 120, in =180] (b);
\draw[postaction={decorate},>=stealth, DarkOrchid, thick, name path =zz] (a) to[out=60, in=0] (b);\draw[postaction={decorate},>=stealth,DarkOrchid, thick, fill=DarkOrchid!30] (a) to[out = 50, in =160] (0.8,1) to[out=-20, in =0] (a);
\draw[postaction={decorate},>=stealth,LimeGreen, thick,fill=LimeGreen!30] (a) to[out = 130, in =20] (-0.8,1) to[out=200, in =180] (a);\node[LimeGreen, draw=none,fill=none] (x) at (-1, 0.3) {\tiny$m$};
\node[DarkOrchid, draw=none,fill=none] (x) at (1, 0.3) {\tiny$n$};
\end{scope}
\begin{scope}[shift={(6,1.5)}]
\draw[->] (-1,0) -- (0,0);
\end{scope}
\begin{scope}[shift={(7,1)},decoration={
    markings,
    mark=at position 0.65 with {\arrow{>}}}]
\path[fill=gray!30] (0,1.2) to[out = 240, in =120] (0,0) to[in=-60, out=60] cycle;
\node[circle, draw, fill=white, inner sep = 0.4pt,minimum width = 2pt] (a) at (0,0.6) {};
\node[circle, draw, fill, inner sep = 0pt,minimum width = 2pt] (a) at (0,0) {};
\node[circle, draw, fill, inner sep = 0pt,minimum width = 1pt] (b) at (0,1.2) {};
\draw[postaction={decorate},>=stealth,LimeGreen, thick,name path=y] (b) to[in = 120, out =240] (a);
\draw[postaction={decorate},>=stealth, DarkOrchid, thick, name path=x] (b) to[in=60, out=-60] (a);
\draw[postaction={decorate},>=stealth,LimeGreen, thick, name path=yy] (a) to[in = 180, out =180] (b);
\draw[postaction={decorate},>=stealth,DarkOrchid, thick, name path=xx] (a) to[in = 0, out =0] (b);
\node[LimeGreen, draw=none,fill=none] (x) at (-0.9, 0.8) {\tiny$m-1$};
\node[DarkOrchid, draw=none,fill=none] (x) at (0.9, 0.8) {\tiny$n-1$};
\tikzfillbetween[of=y and yy]{LimeGreen, opacity=0.3};
\tikzfillbetween[of=x and xx]{DarkOrchid, opacity=0.3};

\end{scope}
\begin{scope}[shift={(6,-0.5)}]
\draw[->] (-1,0) -- (0,0);
\end{scope}
\begin{scope}[shift={(7,-1)},decoration={
    markings,
    mark=at position 0.65 with {\arrow{>}}}]
\path[fill=gray!30] (0,1.2) to[out = 240, in =120] (0,0) to[in=-60, out=60] cycle;
\node[circle, draw, fill=white, inner sep = 0.4pt,minimum width = 2pt] (a) at (0,0.6) {};
\node[circle, draw, fill, inner sep = 0pt,minimum width = 2pt] (a) at (0,0) {};
\node[circle, draw, fill, inner sep = 0pt,minimum width = 1pt] (b) at (0,1.2) {};
\draw[postaction={decorate},>=stealth,LimeGreen, thick, name path=y] (a) to[out = 120, in =240] (b);
\draw[postaction={decorate},>=stealth, DarkOrchid, thick, name path=x] (a) to[out=60, in=-60] (b);
\draw[postaction={decorate},>=stealth,LimeGreen, thick,name path=yy] (b) to[out = 180, in =180] (a);
\draw[postaction={decorate},>=stealth,DarkOrchid, thick, name path=xx] (b) to[out = 0, in =0] (a);
\tikzfillbetween[of=y and yy]{LimeGreen, opacity=0.3};
\tikzfillbetween[of=x and xx]{DarkOrchid, opacity=0.3};

\node[LimeGreen, draw=none,fill=none, name path=z] (x) at (-0.8, 0.3) {\tiny$m-1$};
\node[DarkOrchid, draw=none,fill=none, name path=zz] (x) at (0.8, 0.3) {\tiny$n-1$};
\end{scope}
\begin{scope}[shift={(8,1.5)}]
\draw[->] (0,0) -- (0.8,-0.5);
\end{scope}
\begin{scope}[shift={(8,-0.5)}]
\draw[->] (0,0) -- (0.8,0.5);
\end{scope}
\begin{scope}[shift={(10,0)},decoration={
    markings,
    mark=at position 0.75 with {\arrow{>}}}]
\node[circle, draw, fill, inner sep = 0pt,minimum width = 2pt] (a) at (0,0) {};
\draw[postaction={decorate},DarkOrchid, thick,>=stealth, fill=DarkOrchid!30] (a) to[out = 80, in =180] (0.5,1) to[out=0, in =10] (a);
\draw[postaction={decorate},LimeGreen, thick, >=stealth, fill=LimeGreen!30] (a) to[out = 100, in =0] (-0.5,1) to[out=180, in =170] (a) ;
\draw[LimeGreen,shift={(1.5,1.5)},fill=gray!30, thick] (0:0.5) \foreach \x in {25.714,51.428,...,359} {
            -- (\x:0.5)
        } -- cycle (90:0.5);
\foreach \x in {0,51.428,...,359} {
\draw[DarkOrchid, shift={(1.5,1.5)}, thick] (\x:0.5) -- (\x+25.714:0.5);
        }
\draw[->] (0.8, 1.5) to[out=180, in=90] (0,1);
\end{scope}
\end{tikzpicture}
\caption{The map $\hat f_{-}:\hat X_B\to\hat{ \bar X}_B^-\to \hat X_A$ (top), and $\hat f_{+}:\hat X_B\to\hat {\bar X}_B^+\to\hat X_A$ (bottom). White nodes are contained in the $2$-cells whose boundary is mapped $p$-to-$1$.}\label{fig:attaching 2-cells}
\end{figure}
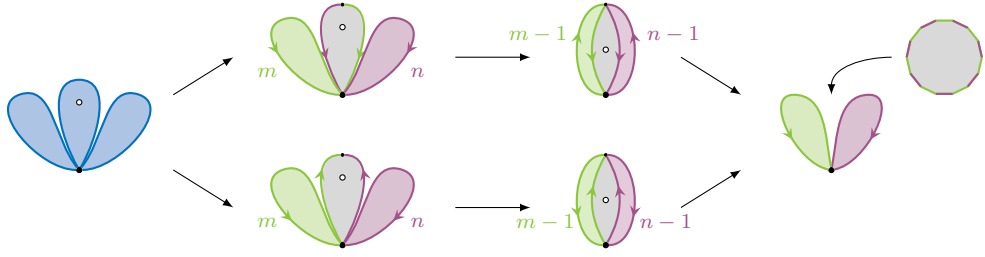
\end{proof}

Similarly, we use \Cref{thm:conditions for rf am prod} to prove that $\text{Art}_{2MN}$ where one of $M,N$ is odd, is residually finite.
\begin{thm}\label{thm:rf one odd}
Let $M = 2m+1$ and $N=2n$ be both $\geq 4$. Then $\text{Art}_{2MN}$ is residually finite.
\end{thm}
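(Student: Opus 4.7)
The plan is to mirror the proof of \Cref{thm:rf all even}, using the amalgamated-product criterion \Cref{thm:conditions for rf am prod} in place of the HNN version. First I would reduce to the index-two subgroup: by \Cref{cor:at least one odd splitting}, $\text{Art}_{2MN}=A*_C B$ with $A\simeq F_2$, $B\simeq F_3$, $C\simeq F_5$ and $[B:C]=2$, so $\text{Art}_{2MN}$ contains the double $D(A,C,\beta)$ as an index-two subgroup, where $\beta:C\to C$ is conjugation by a representative of the nontrivial coset of $B/C$. Since residual finiteness passes between finite-index subgroups, it suffices to prove $D(A,C,\beta)$ is residually finite.

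Next I would set up the quotient. Choose $\hat A=\langle x,y\mid x^{2m+1},y^{n},(x^{-1}y)^p\rangle$ with $p$ large enough that $\frac{1}{2m+1}+\frac{1}{n}+\frac{1}{p}<1$, so that $\hat A$ is a hyperbolic von Dyck triangle group and hence locally quasiconvex and virtually special, giving Condition~(1) of \Cref{thm:conditions for rf am prod}. Let $\pi:A\to\hat A$ be the induced quotient and $\hat X_A$ its presentation $2$-complex. Form $\hat X_C$ by attaching a $2$-cell to $X_C$ along every lift (through $X_C\to\bar X_C\to X_A$) of each of the three relators $x^{2m+1}$, $y^n$, and $(x^{-1}y)^p$; this yields $\pi_1\hat X_C=\pi(C)$ and an embedding of universal covers, verifying Condition~(2). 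The deck transformation of the double cover $X_C\to X_B$ that realizes $\beta$ permutes these lifts, so $\beta$ descends to an automorphism of $\pi(C)$, giving Condition~(4).

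The real work is verifying Condition~(3), almost malnormality of $\pi(C)$ in $\hat A$. Applying \Cref{thm:fiber product} to the $2$-complexes $\hat X_C, \hat X_A$, each nontrivial intersection $\pi(C)\cap \hat g^{-1}\pi(C)\hat g$ with $\hat g\notin\pi(C)$ is the fundamental group of a non-diagonal component of $\hat X_C\otimes_{\hat X_A}\hat X_C$, whose $1$-skeleton is a non-diagonal component of $\bar X_C\otimes_{X_A}\bar X_C$. By \Cref{prop:fiber product one odd} the fundamental group of each such $1$-skeleton component is generated by some subset of $\{x^{2m+1},y^n,x^{-1}y,yx^{-1}\}$; since $\pi(x^{2m+1})=\pi(y^n)=1$ and $\pi(x^{-1}y)$ has order $p$, attaching the $2$-cells collapses every non-diagonal component to one with finite fundamental group, establishing (3).

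The main obstacle is precisely this last step: although each listed graph-level intersection manifestly projects to a finite subgroup of $\hat A$, one must carefully check that the $2$-cells attached in each non-diagonal component of $\hat X_C\otimes_{\hat X_A}\hat X_C$ suffice to actually finitize the rank-$3$ components of \Cref{fig:fiber product}(a), not merely produce finite quotients. The explicit description of $\bar X_C$ in \Cref{fig:CtoA}(b) makes this bookkeeping tractable but intricate, and is where essentially all the work of the proof lies.
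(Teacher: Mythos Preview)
Your overall strategy matches the paper's: pass to the index-two double $D(A,C,\beta)$ and apply \Cref{thm:conditions for rf am prod} with the hyperbolic von Dyck quotient $\hat A=\langle x,y\mid x^{2m+1},y^n,(x^{-1}y)^p\rangle$. The gap is that you have inverted the location of the difficulty.

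You treat Condition~(2) as immediate, asserting that attaching the relator $2$-cells ``yields $\pi_1\hat X_C=\pi(C)$ and an embedding of universal covers''. This is exactly what must be \emph{proved}. Attaching cells produces a complex whose fundamental group \emph{surjects} onto $\pi(C)$; injectivity, and the fact that $\hat f$ lifts to an embedding of universal covers, is the substantive step of the whole argument. The paper isolates this as a separate lemma (\Cref{lem:von dyck lemma}): one shows $\pi(C)\simeq\mathbb Z/p*\mathbb Z/p$ by constructing a $\langle z,z'\rangle$-equivariant embedding of the Bass--Serre tree of that free product into $\mathbb H^2$ (tiled by the $(2m{+}1,n,p)$ triangles), and then verifies embeddedness via a convexity argument showing that a rightmost bi-infinite path in the image never crosses itself. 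None of this is automatic from the cell attachments.

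Conversely, Condition~(3) is not the bottleneck you describe. Once \Cref{lem:von dyck lemma} is available, \Cref{prop:fiber product one odd} yields almost malnormality immediately: every listed intersection $C\cap g^{-1}Cg$ is generated by elements from $\{x^{2m+1},y^n,x^{-1}y,yx^{-1}\}$, so its image under $\pi$ lies in a conjugate of $\langle x^{-1}y\rangle\simeq\mathbb Z/p$ or is trivial. Your anticipated ``intricate bookkeeping'' about whether the attached $2$-cells suffice to finitize the rank-$3$ components, and your appeal to \Cref{thm:fiber product} for $2$-complexes (it is stated only for graphs), are not needed. The work you are missing is the hyperbolic-geometry argument behind Condition~(2), not the combinatorics behind Condition~(3).
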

\begin{proof}
We use \Cref{thm:conditions for rf am prod}. 
Let $\hat A = \langle x,y\mid x^{2m+1}, y^n, (x^{-1}y)^p \rangle$ where $p\geq 6$ and let $\hat X_A$ be its presentation $2$-complex.

Since $n\geq 2$ and $2m+1\geq 5$, the group $\hat A$ is a hyperbolic von Dyck triangle group, and in particular, $\hat A$ satisfies Condition (1) of \Cref{thm:conditions for rf am prod}. Let $\pi:A\to \hat A$ be the natural quotient. 
Let $\hat {\bar X}_C$ be a $2$-complex obtained from $\bar X_C$ by attaching five $2$-cells: one along the unique cycle labelled $x^{2m+1}$, one along each of the two cycles labelled by $y^n$, and one along each of two cycles $xy^{-1}$ via a $p$-to-one map. See \Cref{fig:attaching 2-cells one odd}. 
In \Cref{lem:von dyck lemma} (below) we verify that Condition (2) of \Cref{thm:conditions for rf am prod} is satisfied. 

By \Cref{prop:fiber product one odd}, the intersection of distinct conjugates of $\pi(C)$ in $\hat A$ is either $\mathbb Z/p$ or trivial. 
In particular, $\pi(C)$ is almost malnormal in $\hat A$, so Condition (3) of \Cref{thm:conditions for rf am prod} is satisfied. 
Finally, we note that the $2$-cells of $\hat {\bar X}_C$ can be pulled back via $X_C\to \bar X_C$, 
and are preserved under the (unique) nontrivial deck transformation of $X_C\to X_B$. 
See \Cref{fig:attaching 2-cells one odd}. Condition (4) of \Cref{thm:conditions for rf am prod} follows. 
This completes the proof.

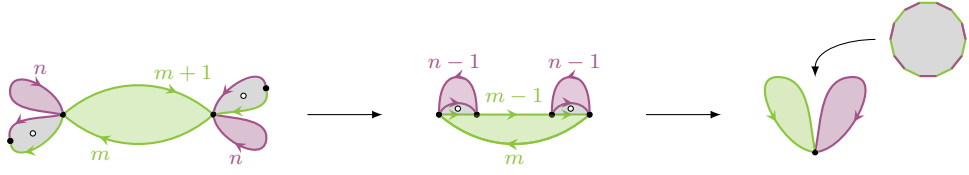
\begin{figure}
\begin{tikzpicture}
\begin{scope}[shift={(-5.25,-1.5)},decoration={
    markings,
    mark=at position 0.75 with {\arrow{>}}}]
\path[fill=gray!30] (2.7,0.35) to[in = 10, out =290] (2,0) to[in=90, out=100] (2.7,0.37);
\path[fill=gray!30] (0,0) to[in = 110, out =190] (-0.71, -0.37) to[in=270, out=270] (0,0);
\node[circle, draw, fill=white, inner sep = 0.4pt,minimum width = 2pt] (a) at (2.4,0.25) {};
\node[circle, draw, fill=white, inner sep = 0.4pt,minimum width = 2pt] (a) at (-0.4,-0.25) {};

\node[circle, draw, fill, inner sep = 0pt,minimum width = 2pt] (-) at (0,0) {};
\node[circle, draw, fill, inner sep = 0pt,minimum width = 2pt] (+) at (2,0) {};
\node[circle, draw, fill, inner sep = 0pt,minimum width = 2pt] (a) at (-0.7, -0.35) {};
\node[circle, draw, fill, inner sep = 0pt,minimum width = 2pt] (b) at (2.7,0.35) {};

\draw[LimeGreen, thick, postaction={decorate},>=stealth, name path=x] (-) to[out = 40, in =140] (+);
\node[LimeGreen, draw=none,fill=none] (x) at (0.5, -0.55) {\tiny$m$};
\draw[LimeGreen, thick, postaction={decorate},>=stealth,name path=xx] (+) to[in = -40, out =220] (-);
\node[LimeGreen, draw=none,fill=none] (x) at (1.6, 0.55) {\tiny$m+1$};
\tikzfillbetween[of=x and xx]{LimeGreen, opacity=0.3};

\draw[DarkOrchid, thick, postaction={decorate},>=stealth, fill=DarkOrchid!30] (+) to[out = -10, in =80] (2.7,-0.35) to[out=260, in =280] (+);
\node[DarkOrchid, draw=none,fill=none] (y) at (-0.3, 0.6) {\tiny$n$};
\draw[DarkOrchid, thick, postaction={decorate},>=stealth,fill=DarkOrchid!30] (-) to[out = 170, in =260] (-0.7,0.35) to[out=80, in =100] (-) ;
\node[DarkOrchid, draw=none,fill=none] (y) at (2.3, -0.6) {\tiny$n$};


\draw[LimeGreen, thick, postaction={decorate},>=stealth, name path=x] (b) to[in = 10, out =280] (+);
\draw[DarkOrchid, thick, postaction={decorate},>=stealth, name path=z] (b) to[out = 100, in =80] (+) ;
\draw[DarkOrchid, thick, postaction={decorate},>=stealth, name path=y] (-) to[out = 190, in =100] (a);
\draw[LimeGreen, thick, postaction={decorate},>=stealth, name path=v] (-) to[in=280, out =260] (a);

\end{scope}
\begin{scope}[shift={(-1,-1.5)}]
\draw[->] (-1,0) -- (0,0);
\end{scope}
\begin{scope}[shift={(-0.25,-1.5)},decoration={
    markings,
    mark=at position 0.55 with {\arrow{>}}}]
\node[circle, draw, fill, inner sep = 0pt,minimum width = 2pt, name path =a] (-) at (0,0) {};
\node[circle, draw, fill, inner sep = 0pt,minimum width = 2pt] (p) at (0.5,0) {};
\node[circle, draw, fill, inner sep = 0pt,minimum width = 2pt] (q) at (1.5,0) {};
\node[circle, draw, fill, inner sep = 0pt,minimum width = 2pt] (+) at (2,0) {};

\draw[LimeGreen, thick, postaction={decorate},>=stealth, name path=z] (-) to (p);
\draw[LimeGreen, thick, postaction={decorate},>=stealth,name path=c] (p) to (q);
\draw[LimeGreen, thick, postaction={decorate},>=stealth,name path=v] (q) to (+);
\node[LimeGreen, draw=none,fill=none] (x) at (1, 0.25) {\tiny$m-1$};

\draw[DarkOrchid, thick, postaction={decorate},>=stealth, name path=x] (-) to[in=120, out=60] (p);
\draw[DarkOrchid, thick, postaction={decorate},>=stealth, name path=xx] (p) to[out=90, in=0] (0.25, 0.5) to[out=180, in=90] (-);
\node[DarkOrchid, draw=none,fill=none] (x) at (0.2, 0.7) {\tiny$n-1$};
\draw[DarkOrchid, thick, postaction={decorate},>=stealth, name path=y] (q) to[in=120, out=60] (+);
\draw[DarkOrchid, thick, postaction={decorate},>=stealth, name path=yy] (+) to[out=90, in=0] (1.75, 0.5) to[out=180, in=90] (q);
\node[DarkOrchid, draw=none,fill=none] (x) at (1.8, 0.7) {\tiny$n-1$};

\draw[LimeGreen, thick, postaction={decorate},>=stealth, name path = b] (+) to[in = -40, out =220] (-);

\tikzfillbetween[of=c and b]{LimeGreen, opacity=0.3};
\tikzfillbetween[of=x and xx]{DarkOrchid, opacity=0.3};
\tikzfillbetween[of=y and yy]{DarkOrchid, opacity=0.3};
\tikzfillbetween[of=x and z]{gray, opacity=0.3};
\tikzfillbetween[of=y and v]{gray, opacity=0.3};
\node[circle, draw, fill=white, inner sep = 0.4pt,minimum width = 2pt] (a) at (0.25,0.08) {};
\node[circle, draw, fill=white, inner sep = 0.4pt,minimum width = 2pt] (a) at (1.75,0.08) {};

\node[LimeGreen, draw=none,fill=none] (x) at (1, -0.6) {\tiny$m$};
\draw[LimeGreen, thick, postaction={decorate},>=stealth, name path = b] (+) to[in = -40, out =220] (-);

\end{scope}
\begin{scope}[shift={(3.5,-1.5)}]
\draw[->] (-1,0) -- (0,0);
\end{scope}
\begin{scope}[shift={(4.75,-2)},decoration={
    markings,
    mark=at position 0.75 with {\arrow{>}}}]
\node[circle, draw, fill, inner sep = 0pt,minimum width = 2pt] (a) at (0,0) {};
\draw[postaction={decorate},DarkOrchid, thick,>=stealth, fill=DarkOrchid!30] (a) to[out = 80, in =180] (0.5,1) to[out=0, in =10] (a);
\draw[postaction={decorate},LimeGreen, thick, >=stealth, fill=LimeGreen!30] (a) to[out = 100, in =0] (-0.5,1) to[out=180, in =170] (a) ;
\draw[LimeGreen,shift={(1.5,1.5)},fill=gray!30, thick] (0:0.5) \foreach \x in {25.714,51.428,...,359} {
            -- (\x:0.5)
        } -- cycle (90:0.5);
\foreach \x in {0,51.428,...,359} {
\draw[DarkOrchid, shift={(1.5,1.5)}, thick] (\x:0.5) -- (\x+25.714:0.5);
        }
\draw[->] (0.8, 1.5) to[out=180, in=90] (0,1);\end{scope}
\end{tikzpicture}
\caption{The maps $\hat X_C\to\hat{\bar X}_C \to \hat X_A$. White nodes are in the $2$-cells whose boundary is mapped $p$-to-$1$.}\label{fig:attaching 2-cells one odd}
\end{figure}
\end{proof}

\begin{lem}\label{lem:von dyck lemma}
The image $\pi(C)$ is isomorphic to to $\mathbb Z/p*\mathbb Z/p$. In particular, $\pi(C) = \pi_1 \hat X_C$. Moreover, $\hat f$ lifts to an embedding in the universal covers.
\end{lem}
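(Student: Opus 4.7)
The plan is to compute $\pi_1\hat X_C$ combinatorially, identify the natural surjection onto $\pi(C)$, promote it to an isomorphism via the hyperbolic geometry of $\hat A$, and then deduce the universal-cover embedding from a local isometry argument.

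First, I would read off $\pi_1\hat X_C$ from its cell structure. The graph $\bar X_C$ has rank $5$, and one may choose a basis of embedded cycles consisting of the single cycle labelled $x^{2m+1}$, the two cycles labelled $y^n$, and two cycles labelled by $x^{-1}y$ (the extra loops visible in \Cref{fig:CtoA}(b)). The five $2$-cells of $\hat X_C$ are attached along these five basis loops, the first three once-around and the last two via a degree-$p$ map, so van Kampen immediately gives
\[ \pi_1\hat X_C = \langle \alpha,\beta_1,\beta_2,\gamma_1,\gamma_2 \mid \alpha,\,\beta_1,\,\beta_2,\,\gamma_1^p,\,\gamma_2^p \rangle \simeq \mathbb Z/p * \mathbb Z/p. \]

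Next, since $\pi$ sends $x^{2m+1}$ and $y^n$ to the identity, the image $\pi(C)$ is generated by the two images $\pi(\gamma_i)$, each of order dividing $p$; thus $\hat f_{*}$ factors as a surjection $\mathbb Z/p * \mathbb Z/p \twoheadrightarrow \pi(C)\hookrightarrow \hat A$. To show this surjection is an isomorphism I would use that, under the hypothesis $2m+1 \geq 5$, $n \geq 2$, $p \geq 6$, the group $\hat A$ is a hyperbolic von Dyck triangle group acting properly by isometries on $\mathbb H^2$ with the $(2m+1,n,p)$-tessellation, and each element of order $p$ is a rotation about a uniquely determined $p$-vertex. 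Reading off $F_2$-representatives for $\gamma_1,\gamma_2$ from \Cref{fig:CtoA}(b), one checks they represent rotations about two distinct $p$-vertices. A standard Klein ping-pong argument on small disjoint disks around these two fixed points then shows that $\langle \pi(\gamma_1),\pi(\gamma_2)\rangle \simeq \mathbb Z/p * \mathbb Z/p$, establishing injectivity and the identification $\pi(C) = \pi_1\hat X_C$.

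Finally, for the embedding in universal covers I would equip $\hat X_A$ with the piecewise-hyperbolic CAT$(-1)$ metric obtained by realizing each $2$-cell as the corresponding face of the $(2m+1,n,p)$-tessellation, and pull this metric back to $\hat X_C$ along $\hat f$. Each $2$-cell of $\hat X_C$ maps homeomorphically onto a face of $\hat X_A$, since the degree-$p$ attaching on a $\gamma_i$-cycle lines up exactly with the $p$-gon of $\hat X_A$ traversed once. A routine link check at each vertex of $\hat X_C$ shows $\hat f$ is a local isometry between locally CAT$(-1)$ complexes, and combined with the $\pi_1$-injectivity above, the Cartan--Hadamard theorem gives that $\hat f$ lifts to an isometric embedding of universal covers.

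The main obstacle I expect is the ping-pong step, since it requires explicit $F_2$-representatives for $\gamma_1,\gamma_2$ and pinning down their fixed $p$-vertices in the tessellation. If that becomes unwieldy, I would fall back on an abstract Euler-characteristic argument, computing $\chi(\pi(C))$ from the Bass--Serre decomposition of $\hat A$ and comparing with $\chi(\mathbb Z/p * \mathbb Z/p) = \tfrac{2}{p} - 1$: any non-trivial kernel of the surjection $\mathbb Z/p*\mathbb Z/p \twoheadrightarrow \pi(C)$ would strictly increase $\chi(\pi(C))$ above this value, yielding a contradiction.
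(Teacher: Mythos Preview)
Your overall plan parallels the paper's: reduce to showing that two order-$p$ elliptics in the von Dyck group $\hat A$ generate a genuine free product. The gap is in how you propose to verify this. Ping-pong with ``small disjoint disks around the two fixed points'' does not work for rotations: a nontrivial power of the rotation about $p_1$ does not carry a small disk about $p_2$ into a small disk about $p_1$. The correct Klein--Maskit combination uses fundamental sectors of angle $2\pi/p$ at each fixed point, opening toward the other, and one must check that their \emph{exteriors} are disjoint; equivalently, that the bounding geodesic rays from $p_1$ never enter the complementary sectors at $p_2$. Whether this holds is a genuine geometric fact about the relative position of the two $p$-vertices in the tiling and is not automatic --- it is exactly the content the paper supplies by constructing a $(\mathbb Z/p*\mathbb Z/p)$-equivariant embedding of the Bass--Serre tree into $\mathbb H^2$ and verifying, via an auxiliary barrier path $\gamma'$ and an angle count, that a rightmost-turning bi-infinite path in the image never self-intersects. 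Your Euler-characteristic fallback also stalls: there is no Bass--Serre decomposition of the cocompact Fuchsian group $\hat A$ to read $\chi(\pi(C))$ from, and any independent computation of $\chi(\pi(C))$ already presupposes knowing the group.

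There is a second issue with the CAT$(-1)$ step. The presentation complex $\hat X_A$ is not locally CAT$(-1)$ under any piecewise-hyperbolic polygonal metric: the link at its unique vertex contains bigons coming from the proper-power relators $x^{2m+1}$ and $y^n$, and the corresponding corner angles are strictly less than $\pi$, so these bigons have length $<2\pi$. (Equivalently, $\hat X_A$ is not aspherical.) Hence the local-isometry/Cartan--Hadamard route is unavailable as stated. The paper sidesteps this by working not with $\hat X_A$ itself but with the $\mathbb H^2$-tiling dual to its universal cover, and it deduces both the free-product structure of $\pi(C)$ and the embedding of universal covers directly from the equivariant tree embedding.
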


\begin{proof}
For simplicity we set $z=xy^{-1}$. 
The image $\pi(C)$ is generated by $z$ and $z'=x^mzx^{-m}$. 
The universal cover of the complex $\hat X_A$ can be identified with the hyperbolic plane. 
Consider the tiling of $\mathbb H^2$ by a triangle with angles $\frac{\pi}{2m+1}, \frac{\pi}{n}, \frac{\pi}{p}$. 
Each vertex of the tiling is a fixed point of a conjugate of one of $x,y,z$, 
and the action of $\hat A$ preserve the type of a vertex (i.e.\ whether it is fixed by a conjugate of $x,y$ or $z$). 
We abuse the notation and identify each vertex $v$ with the conjugate $x^g, y^g$ or $z^g$ 
which generates the stabilizer of $v$ (where $g$ is some element of $\hat A$). 
The tiling is the dual of the universal cover of the complex $\hat X_A$, 
in the way that the vertices of types $x,y,z$ correspond to the $2$-cells with boundary words $x^{2m+1}, y^{n}, (xy^{-1})^p$ respectively.

Consider the Bass-Serre tree $T$ of the free product $\mathbb Z/p*\mathbb Z/p$, i.e.\ a regular tree of valence $p$, where each vertex is stabilized by a conjugate of one of two $\mathbb Z/p$ factors.
In order to prove that $ \pi(C)$ splits as a free product and that $\hat f$ lifts to embedding of the universal covers, we show that there is $\mathbb Z/p*\mathbb Z/p$-equivariant embedding of $T$ in $\mathbb H^2$ where the action of $\mathbb Z/p*\mathbb Z/p$ on $\mathbb H^2$ is the action of the group $\langle z,z'\rangle$.

First consider the union of the orbits of $z$, and of $z'$ under the action of $\pi(C)$. Note that it is a collection of vertices of type $z$. We join $z^g$ and $(z')^g$ by a path consisting of two edges of the tiling meeting at a vertex of type $x$. See Figure~\ref{fig:tiling}.
\begin{figure}
\begin{tikzpicture}
\begin{scope}
\path[draw=black, fill=Black!50] (0,0) -- (-1,0) -- (0,1) --(0,0);
\path[draw=black, fill=Black!50] (1,1) -- (1,0) -- (0,1) --(1,1);
\path[draw=black, fill=Black!50] (1,0) -- (2,0) -- (2,-1) -- (1,0);
\path[draw=black, fill=Black!50] (2,0) -- (3,-1) -- (3,0) -- (2,0);
\path[draw=black, fill=Black!50] (2,0) -- (2,1) -- (3,1) -- (2,0);
\path[draw=black, fill=Black!50] (0,1) -- (-1,1) -- (-1,2) -- (0,1);
\path[draw=black, fill=Black!50] (0,1) -- (0,2) -- (1,2) -- (0,1);
\path[draw=black, fill=Black!50] (-1,0) -- (-2,0) -- (-2,-1) -- (-1,0);
\path[draw=black, fill=Black!50] (-2,-2) -- (-2,-1) -- (-1,-1) -- (-2,-2);
\path[draw=black, fill=Black!50] (-3,-1) -- (-3,-2) -- (-2,-1);

\path[draw=black] (0,0) -- (1,0) -- (2,1);
\path[draw=black] (2,-1) -- (3,-1);
\path[draw=black] (1,1) -- (1,2);
\path[draw=black] (-1,1) -- (-1,0) -- (-1,-1);
\path[draw=black] (-3,-2) -- (-2,-2);
\path[very thick, draw] (3,-1) -- (2,0) -- (1,0) -- (0,1) -- (-1,0) -- (-2,-1) -- (-2,-2);
\path[very thick, draw] (-1,2) -- (0,1) -- (1,2);
\path[very thick, draw] (2,0) -- (3,1);
\path[very thick, draw] (-2,-1) -- (-3,-1);

\draw[YellowOrange] (1.25,0) arc(0:180:0.25);
\node[YellowOrange, draw=none,fill=none] () at (1.2, 0.35) {\tiny $\pi$};

\draw[YellowOrange] (-1.25,0) arc(180:45:0.25);
\node[YellowOrange, draw=none,fill=none] () at (-1.2, 0.35) {\tiny $\pi$};

\draw[YellowOrange] (-0.25,0) arc(180:360:0.25);
\node[YellowOrange, draw=none,fill=none] () at (0, -0.4) {\tiny $\geqslant \pi$};

\draw[YellowOrange] (-0.25,0) arc(180:360:0.25);
\node[YellowOrange, draw=none,fill=none] () at (0, -0.4) {\tiny $\geqslant \pi$};

\draw[YellowOrange] (-1,-0.75) arc(90:-135:0.25);
\node[YellowOrange, draw=none,fill=none] () at (-1, -1.4) {\tiny $\geqslant \pi$};

\draw[YellowOrange] (2.25,-1) arc(0:-225:0.25);
\node[YellowOrange, draw=none,fill=none] () at (2, -1.4) {\tiny $\geqslant \pi$};

\node[circle, draw, fill, inner sep = 0pt,minimum width = 2pt, RoyalBlue, label=above right:\tiny $y_3$] () at (0,0) {};
\node[circle, draw, fill, inner sep = 0pt,minimum width = 2pt, Red] () at (0,1) {};
\node[circle, draw, fill, inner sep = 0pt,minimum width = 2pt, ForestGreen, label=below:\tiny $x_4$] () at (1,0) {};
\node[circle, draw, fill, inner sep = 0pt,minimum width = 2pt, Red, label=above left:\tiny $z_5$] () at (2,0) {};
\node[circle, draw, fill, inner sep = 0pt,minimum width = 2pt, RoyalBlue, label=above right:\tiny $y_5$] () at (2,-1) {};
\node[circle, draw, fill, inner sep = 0pt,minimum width = 2pt, ForestGreen, label=below:\tiny $x_6$] () at (3,-1) {};
\node[circle, draw, fill, inner sep = 0pt,minimum width = 2pt, ForestGreen] () at (-3,-1) {};
\node[circle, draw, fill, inner sep = 0pt,minimum width = 2pt, ForestGreen, label=below right:\tiny $x_2$] () at (-1,0) {};
\node[circle, draw, fill, inner sep = 0pt,minimum width = 2pt, ForestGreen] () at (3,1) {};
\node[circle, draw, fill, inner sep = 0pt,minimum width = 2pt, RoyalBlue, label=above left:\tiny $y_1$] () at (-1,-1) {};
\node[circle, draw, fill, inner sep = 0pt,minimum width = 2pt, RoyalBlue] () at (2,1) {};
\node[circle, draw, fill, inner sep = 0pt,minimum width = 2pt, RoyalBlue] () at (3,0) {};
\node[circle, draw, fill, inner sep = 0pt,minimum width = 2pt, ForestGreen] () at (1,2) {};
\node[circle, draw, fill, inner sep = 0pt,minimum width = 2pt, ForestGreen] () at (-1,2) {};
\node[circle, draw, fill, inner sep = 0pt,minimum width = 2pt, ForestGreen, label=below:\tiny $x_0$] () at (-2,-2) {};
\node[circle, draw, fill, inner sep = 0pt,minimum width = 2pt, RoyalBlue] () at (-1,1) {};
\node[circle, draw, fill, inner sep = 0pt,minimum width = 2pt, RoyalBlue] () at (1,1) {};
\node[circle, draw, fill, inner sep = 0pt,minimum width = 2pt, RoyalBlue] () at (0,2) {};
\node[circle, draw, fill, inner sep = 0pt,minimum width = 2pt, Red, label=above left:\tiny $z_1$] () at (-2,-1) {};
\node[circle, draw, fill, inner sep = 0pt,minimum width = 2pt, RoyalBlue] () at (-2,0) {};
\node[circle, draw, fill, inner sep = 0pt,minimum width = 2pt, RoyalBlue] () at (-3,-2) {};

\node[draw=none,fill=none] () at (-0.36, 1.8) {\tiny$\dots$};
\node[draw=none,fill=none, rotate=-90] () at (2.8, 0.36) {\tiny$\dots$};
\node[draw=none,fill=none, rotate=45] () at (-2.5, -0.5) {\tiny$\dots$};
\node[draw=none,fill=none] () at (1.36, 0.8) {\tiny$\dots$};

\end{scope}

\begin{scope}[shift={(5,0)}]
\draw[fill=Black!50] (0,0) -- (1,0) -- (1,1) -- cycle;
\draw[fill=Black!50] (2,0) -- (3,0) -- (3,1) -- cycle;
\draw[fill=Black!50] (4,0) -- (5,0) -- (5,1) -- cycle;
\draw[fill=Black!50] (1,0) -- (2,0) -- (1.5, -1) -- cycle;
\draw[fill=Black!50] (3,0) -- (4,0) -- (3.5, -1) -- cycle;
\draw[fill=Black!50] (5,0) -- (6,0) -- (5.5, -1) -- cycle;
\draw (0,-1) -- (0,0) -- (0.5, -1) -- (1,0) -- (1,-1);
\draw (2,-1) -- (2,0) -- (2.5, -1) -- (3,0) -- (3,-1);
\draw (4,-1) -- (4,0) -- (4.5, -1) -- (5,0) -- (5,-1);
\draw (6,-1) -- (6,0);
\draw[YellowOrange] (0,-1) -- (6,-1);

\draw[very thick] (0,0) -- (1,1) -- (2,0) -- (3,1) -- (4,0) -- (5,1) -- (6,0);
 
\node[circle, draw, fill, inner sep = 0pt,minimum width = 2pt, ForestGreen, label=above:\tiny $x_0$] () at (0,0) {};
\node[circle, draw, fill, inner sep = 0pt,minimum width = 2pt, label=above right:\tiny $y_1$, RoyalBlue] () at (1,0) {};
\node[circle, draw, fill, inner sep = 0pt,minimum width = 2pt, ForestGreen, label=above:\tiny $x_2$] () at (2,0) {};
\node[circle, draw, fill, inner sep = 0pt,minimum width = 2pt, RoyalBlue, label=above right:\tiny $y_3$] () at (3,0) {};
\node[circle, draw, fill, inner sep = 0pt,minimum width = 2pt, ForestGreen, label=above:\tiny $x_4$] () at (4,0) {};
\node[circle, draw, fill, inner sep = 0pt,minimum width = 2pt, RoyalBlue, label=above right:\tiny $y_5$] () at (5,0) {};
\node[circle, draw, fill, inner sep = 0pt,minimum width = 2pt, ForestGreen, label=above:\tiny $x_6$] () at (6,0) {};
\node[circle, draw, fill, inner sep = 0pt,minimum width = 2pt, Red, label=left:\tiny $z_1$] () at (1,1) {};
\node[circle, draw, fill, inner sep = 0pt,minimum width = 2pt, Red, label=left:\tiny $z_3$] () at (3,1) {};
\node[circle, draw, fill, inner sep = 0pt,minimum width = 2pt, Red, label=left:\tiny $z_5$] () at (5,1) {};
\node[circle, draw, fill, inner sep = 0pt,minimum width = 2pt, Red, label=below:\tiny $s_0$] () at (0.5,-1) {};
\node[circle, draw, fill, inner sep = 0pt,minimum width = 2pt, Red, label=below:\tiny $s_1$] () at (1.5,-1) {};
\node[circle, draw, fill, inner sep = 0pt,minimum width = 2pt, Red, label=below:\tiny $s_2$] () at (2.5,-1) {};
\node[circle, draw, fill, inner sep = 0pt,minimum width = 2pt, Red, label=below:\tiny $s_3$] () at (3.5,-1) {};
\node[circle, draw, fill, inner sep = 0pt,minimum width = 2pt, Red, label=below:\tiny $s_4$] () at (4.5,-1) {};
\node[circle, draw, fill, inner sep = 0pt,minimum width = 2pt, Red, label=below:\tiny $s_5$] () at (5.5,-1) {};
\node[circle, draw, fill, inner sep = 0pt,minimum width = 2pt, Gray, label=below:\tiny $q_1$] () at (1,-1) {};
\node[circle, draw, fill, inner sep = 0pt,minimum width = 2pt, Gray, label=below:\tiny $q_3$] () at (3,-1) {};
\node[circle, draw, fill, inner sep = 0pt,minimum width = 2pt, Gray, label=below:\tiny $q_5$] () at (5,-1) {};
\node[circle, draw, fill, inner sep = 0pt,minimum width = 2pt, Gray, label=below:\tiny $q_0$] () at (0,-1) {};
\node[circle, draw, fill, inner sep = 0pt,minimum width = 2pt, Gray, label=below:\tiny $q_2$] () at (2,-1) {};
\node[circle, draw, fill, inner sep = 0pt,minimum width = 2pt, Gray, label=below:\tiny $q_4$] () at (4,-1) {};
\node[circle, draw, fill, inner sep = 0pt,minimum width = 2pt, Gray, label=below:\tiny $q_6$] () at (6,-1) {};

\node[draw=none,fill=none] () at (1.025, -0.5) {\tiny$\dots$};
\node[draw=none,fill=none] () at (2.025, -0.5) {\tiny$\dots$};
\node[draw=none,fill=none] () at (3.025, -0.5) {\tiny$\dots$};
\node[draw=none,fill=none] () at (4.025, -0.5) {\tiny$\dots$};
\node[draw=none,fill=none] () at (5.025, -0.5) {\tiny$\dots$};

\end{scope}
\end{tikzpicture}
\caption{The vertices $z_1, z_5$ are $\pi(C)$-conjugates of $s$, and the vertex $z_3$ is a $\pi(C)$-conjugate of $z$. Thick edges are in the image of the tree $T$
Odd gray vertices ($q_1,q_3, q_5$) are either green or red, depending on parity of $n$. Even gray vertices  ($q_0,q_2, q_4, q_6$) are either blue or red, depending on parity of $m$. Note that the orange segments are not necessarily edges in the tiling. }
\label{fig:tiling}
\end{figure}
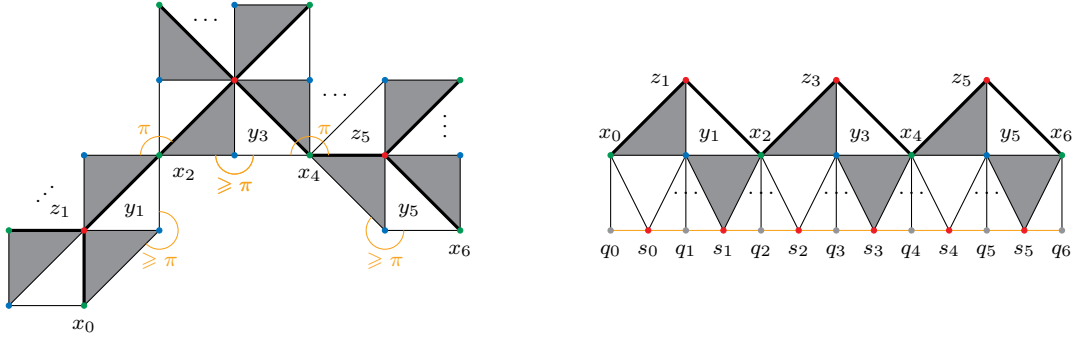

In order to see that $T$ is embedded, we verify that any bi-infinite path $\gamma$ always turning rightmost in the image of $T$ never crosses itself. 
In Figure~\ref{fig:tiling} a part of that path is presented as the path with vertices $x_0, z_1, x_2, z_3, x_4, z_5, x_6$. 
We claim that another path  $\gamma'$ (whose part is labelled by $q_0, s_0, q_1, s_1, \dots, q_5, s_5, q_6$ in Figure~\ref{fig:tiling}) stays in a final distance from $\gamma$, 
and separates $\gamma$ from a subspace of $\mathbb H^2$ which is a union of halfspaces. 
Let us explain how the vertices $q_i, s_i$ are defined. 
The vertex $s_i$ with odd $i$ is the unique vertex other than $z_i$ that forms a triangle with $y_i$ and $x_{i+1}$. 
The vertex $s_i$ with even $i$ is the unique vertex other than $z_{i+1}$ that form a triangle with $x_i$ and $y_{i+1}$. 
In particular, vertices $s_i$ are always of type $z$.
The vertices $q_i$ with odd $i$ are images of a $z_i$ under the $\pi$-rotation at $y_i$, 
i.e.\ the vertex $y_i$ is a midpoint of the segment $[z_i, q_i]$. 
The vertices $q_i$ with even $i$ are chosen so that the angle $\angle s_{i-1}x_iq_i$ and $\angle s_ix_i q_i$ are equal. 
The path $\gamma'$ is obtained by joining each pair $s_{i-1}, q_i$ and$q_i, s_i$ be a geodesic segment (which are not necessarily edges of the tiling).

The vertices $s_{i-1},q_i,s_i$ are not necessarily distinct.
If $n=2$, then $s_{i-1}=q_i=s_i$ for each odd $i$. 
Also, if $2m+1=5$, then $s_{i-1}=q_i=s_i$ for $i=4k+2$.
In that extreme case $\gamma'$ is a geodesic line, as long as $p\geq 6$. 
In more general case, the ``upper'' angle between the segments of $\gamma'$ at each vertex $q_i$ or $s_i$ (i.e.\ the angle of the sector containing $y_i$ or $x_i$ depending on the parity of $i$) is always at most $\pi$. Consequently, the subspace of $\mathbb H^2$ bounded by $\gamma'$ that does not contain the path $\gamma$ is a union of halfspaces. This proves our claim.
\begin{com}
Add more details
\end{com}
\end{proof}

Our approach in the last two theorems fails in the case where $M=2m+1,N=2n+1$. Indeed, the fiber product $\bar X_C\otimes_{X_A}\bar X_C$ is too ``large''. The fiber product was computed in \Cref{rem:both odd}. After attaching $2$-cells along $x^{2m+1}, y^{2n+1}$ and $(x^{-1}y)^p$, the resulting $2$-complex has fundamental group $\mathbb Z*\mathbb Z/p$. 

\subsection{Summary of  residual finiteness of three generator Artin groups}
To summarize, the only three generator Artin groups that are not known to be residually finite are $\text{Art}_{33(2m+1)}$ for $m\geq 2$, $\text{Art}_{2(2m+1)(2n+1)}$ for $m+n\geq 4$ and $\text{Art}_{23(2m)}$ for $m\geq 4$. Indeed, if at least one label is $\infty$ then the defining graph is a tree, and hence virtually special \cite{Brunner92}, \cite{HermillerMeier99}, \cite{LiuGraphManifolds}, \cite{PrzytyckiWiseGraphManifolds}. 
Artin groups $\text{Art}_{22M}$ for any $M\geq 2$, and $\text{Art}_{23M}$ where $M\in\{3,4,5\}$ are spherical, and so linear \cite{CohenWales2002}, \cite{Digne2003}. The cases $(3,3,3), (2,4,4)$ and $(2,3,6)$ follows from \cite{Squier87}. The cases where $M,N,P\geq 3$, except the case of $(3,3,2m+1)$, were covered by \cite{JankiewiczArtinRf}. 

\bibliographystyle{alpha}
\bibliography{../../../kasia}

\end{document}